\DeclareMathAlphabet{\mathpzc}{OT1}{pzc}{m}{it}
\newcommand{\w}{\omega}
\newcommand{\e}{\varepsilon}
\newcommand{\Sm}{\mathcal S\!\mathpzc m\hskip0.3pt}
\newcommand{\Sp}{\mathcal S\!\mathpzc p\hskip0.2pt}
\newtheorem{theorem}{Theorem}[section]
\newtheorem{proposition}[theorem]{Proposition}
\newtheorem{question}[theorem]{Question}
\theoremstyle{definition}
\newtheorem{example}[theorem]{Example}
\begin{document}

\title{Bornological, coarse and uniform groups}
\author{Igor  Protasov}
\begin{abstract} We survey and analyze different ways in which bornologies, coarse structures and  uniformities on a group agree with the group operations.
\end{abstract}
\subjclass[2010]{20F69, 22A05, 54E35, 54H35}
\keywords{bornology, coarse structure, uniformity, Stone-\v Cech compactification}
\address{Faculty of Computer Science and Cybernetics, Kyiv University,
         Academic Glushkov pr. 4d, 03680 Kyiv, Ukraine}
\email{i.v.protasov@gmail.com}

\maketitle

\section{ Bornological groups}
\normalsize

\subsection{Bornological spaces.}
A family $\mathcal{I}$ of subsets of a set $X$ is called an {\it ideal}  (in the Boolean algebra $\mathcal{P}_{X}$ of all subsets of  $X$)   if  $\mathcal{I}$ is closed under formation of finite unions  and subsets.
If $\bigcup \mathcal{I}=X$ then $\mathcal{I}$  is called  a {\it bornology}, so a bornology is an ideal containing the ideal $[X]^{<\w}$ of all finite subsets of $X$.

A {\em bornological space} is a pair $(X,\mathcal B)$ consisting of a set $X$ and a bornology $\mathcal B$ on $X$.
Any set $Y\in \mathcal{B} $  is called   {\it bounded}. If $X\in \mathcal{B}$,  then  the bornological space $(X, \mathcal{B}) $ is  {\it bounded}.

 Any subset $Y\subset X$ of a bornological space $(X,\mathcal B)$ carries the {\em subbornology} $$\mathcal B{\restriction}Y:=\{B\in\mathcal B:B\subset Y\},$$induced by $\mathcal B$.

A bornology  $\mathcal{B}$ on $ X $ is called
\begin{itemize}
\item {\em tall}  if $\mathcal B{\restriction}Y\ne[Y]^{<\w}$ for any infinite subset $Y$;
\item {\em antitall}  if any subset $Y\notin \mathcal{B}$ of $X$ contains an infinite subset $Z\subseteq  Y$ such that $\mathcal B{\restriction}Z=[Z]^{<\w}$.
\end{itemize}

By \cite[Proposition 1]{b13}, every bornology  is the  meet of some tall and antitall bornologies.

A family $\mathcal B'\subseteq\mathcal{B}$ is called a {\it base} of a bornology $\mathcal{B}$    if each set $B\in   \mathcal{B}$ is contained in some set $B'\in  \mathcal B'$. Every bornology  with a  countable  base is antitall. In particular, the
bornology  of all bounded subsets of a metric space is antitall.
We note also that for every bornology
  $\mathcal{B}$ on $X$  with a countable base, there exists  a metric $d$  on $X$ such that $\mathcal{B}$ is a bornology of all bounded subsets of $ (X, d)$.

The product of a family of bornological spaces $(X_\alpha,\mathcal B_\alpha)$, $\alpha\in A$, is the Cartesian product $\prod_{\alpha\in A}X_\alpha$ of their supports endowed  with the bornology generated by the base $\{\prod_{\alpha\in A}B_\alpha:(B_\alpha)_{\alpha\in A}\in\prod_{\alpha\in A}\mathcal B_\alpha\}$.

A mapping $f: X\to Y$ between two bornological spaces $(X,\mathcal{B}_X)$ and $(Y,\mathcal{B}_Y)$  is called {\it bornologous  } if $\{f(B):B\in\mathcal B_X\}\subset\mathcal B_Y$.

A {\it variety} is a class of
 bornological spaces closed under formation of subspaces,
 products and bornologous images.

We denote by $\mathfrak{M} _{single}$ the variety of all singletons,
$ \mathfrak{M}  _{bound}$ the variety of all bounded bornological spaces, $\mathfrak{M}_{\kappa}$ the variety of all  $\kappa$-bounded spaces.
For an infinite cardinal $\kappa$,  a bornological  space $(X, \mathcal{B})$  is called $\kappa$-{\it bounded}  if each subset $B\subset X$ of cardinality $|B|<\kappa$ is bounded.

Every variety of bornological spaces coincides with one of the varieties in the chain:
$$\mathfrak{M}  _{single} \subset \mathfrak{M}  _{bound}\subset \cdots\subset  \mathfrak{M}  _{\kappa}\subset\cdots\subset \mathfrak{M}  _{\omega},$$ see the proof of Theorem 2 in \cite{b14}.

\subsection{Bornologies on  groups}
A bornology on a group $G$  is called {\it right} ({\em left}) {\em invariant } if $\mathcal{B} g\subseteq \mathcal{B}$  (resp. $g\mathcal{B}\subseteq \mathcal{B}$) for each every $g\in G$. Here $\mathcal Bg=\{Bg:B\in\mathcal B\}$ and $g\mathcal B=\{gB:B\in\mathcal B\}$.
A group $G$ endowed with a right (left)  invariant bornology is called a {\it  right \textup{(}left\textup{)} bornological} group.

We say that a group $G$ endowed with a bornology $\mathcal{B}$ is a {\it bornological group} if the group multiplication and inversion are bornologous mapping.  In this case,  $\mathcal{B}$ is called   a {\it group bornology}.
 We note that $\mathcal{B}$  is a group bornology if and only if for any $A, B \in  \mathcal{B}$ we have
 $AB ^{-1} \in \mathcal{B}$. 

\subsection{Duality}
Now we endow every group $G$ with the discrete topology and identify the Stone-\v Cech compactification $\beta G$ of $G$ with the set of all ultrafilters on $G$.
  Then the family $ \{ \overline{A} : A \subseteq G \}$, where
  $\overline{A} = \{ p\in \beta G : A\in p \},$ forms a base of the topology of $\beta G$.
  Given a filter $\varphi$ on $G$, we denote $\overline{\varphi} = \cap \{ \overline{A} : A\in \varphi \}$, so $\varphi$  defines the closed subset   $\overline{\varphi}$ of   $\beta G$, and each  closed subset $K$ of  $\beta G$  can be obtained in this way: $K=\overline{\varphi} $,  where $ \overline{\varphi} = \{ A\subseteq G:  K \subseteq   \overline{A} \}$.

We use the standard extension \cite[Section 4.1]{b7} of the  multiplication on $G$ to the semigroup multiplication on  $\beta G$  such  that for every $p\in \beta G$ the right shift $\beta G\to\beta G$, $x\mapsto  xp$,  is continuous, and for every $g\in G$ the left shift $\beta G\to\beta G$, $x\mapsto gx$, is continuous. For ultrafilters $p, q\in \beta G$ their product $pq$ in $\beta G$ is defined by the formula $$pq=\big\{\mbox{$\bigcup_{x\in P}xQ_x$}:P\in p,\;\{Q_x\}_{x\in P}\subset q\big\}.$$

Let $G^{*}:=\beta G\setminus G$ be  the set of all free ultrafilters on $G$.
It follows directly from the definition of the multiplication in $\beta G$ that   $G^*$ and    $ \overline{ G^{*} G^{*}}$ are ideals  in the semigroup $\beta G$, and  $G^{*}$  is the unique  maximal closed ideal in  $G$. By Theorem 4.44 from \cite{b7},  the closure  $ \overline{ K(\beta G)}$  of the minimal ideal    $K(G)$  of   $\beta G$  is an ideal, so $ \overline{ K(\beta G)}$ is the smallest closed ideal in $\beta G$.  For the structure of    $ \overline{ K(\beta G)}$  and some other ideals in  $\beta G$  see \cite[Sections 4, 6]{b7}.

For an ideal $\mathcal{I}$ on a group $G$ and a closed subset $K$ of $\beta G$,
we put
$$\mathcal{I}^{\wedge} = \{p\in  \beta G:\forall A\in\mathcal I\;\; G\setminus A \in p\}\mbox{ and }
K^{\vee}=\{G\setminus A: A\in\varphi, \;\; \overline{\varphi}=K\}.$$
We have the following duality statements: \vskip 5pt
\begin{itemize}
\item $\mathcal{I}$ is left translation invariant if and only if $\mathcal{I}^{\wedge}$    is a left ideal of the semigroup $\beta G$ ; \vskip 5pt
\item  $\mathcal{I}$ is right  translation invariant if and only if
$(\mathcal{I}^{\wedge})G\subseteq  \mathcal{I}^{\wedge}$;
\item  $(\mathcal{I}^{\wedge})^{\vee}  =\mathcal{I}$;
\item  $\mathcal{I}$ is a bornology if and only if $\mathcal{I}^{\wedge}\subseteq G^{\ast}$.
\end{itemize}

Thus, we have the duality between left invariant bornologies on $G$ and closed left ideals of
$\beta G$ containing in $G^{\ast}$.
  We say that a subset $A$ of a group  $G$ is
 \begin{itemize}
\item{} {\it large} if  $G=FA$  for some $F\in [G]^{<\w}$;
\item{} {\it small} if  $L\setminus A$ is  large for every large subset $L$ of $G$;
\item{} {\it sparse} if  for every   infinite subset   $X$ of $G$ there exists a  finite subset $F\subset X$  such that $\bigcap _{g\in F}  gA$ is  finite.
\end{itemize}

\begin{theorem}\label{1.1} For every infinite group $G$, the family $\Sm _{G}$ of all small subsets of $G$ is a left and right invariant bornology and
$\Sm_{G}^{\wedge}= \overline{K(\beta G)}$.
\end{theorem}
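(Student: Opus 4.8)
The plan is to route everything through the duality of the previous subsection, so that the whole statement collapses to the single identity $\Sm_G^{\wedge}=\overline{K(\beta G)}$. First, $\Sm_G$ is an ideal by a direct check: if $A$ is small then so is every $B\subseteq A$, since $L\setminus B\supseteq L\setminus A$ is large for each large $L$; and if $A,B$ are small then $L\setminus(A\cup B)=(L\setminus A)\setminus B$ is large for every large $L$. It contains $[G]^{<\w}$ because deleting a finite set from a large set leaves a large set. Granting the identity, the remaining assertions are immediate: $\overline{K(\beta G)}$ is the smallest closed ideal of $\beta G$, hence a two-sided ideal contained in the maximal closed ideal $G^{*}$, so the four duality bullets apply, namely $\Sm_G^{\wedge}\subseteq G^{*}$ forces $\Sm_G$ to be a bornology, $\Sm_G^{\wedge}$ being a left ideal makes $\Sm_G$ left invariant, and $(\Sm_G^{\wedge})G\subseteq\Sm_G^{\wedge}$ makes it right invariant. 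Thus the real content is the equality of the two subsets $\Sm_G^{\wedge}$ and $\overline{K(\beta G)}$ of $\beta G$.

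To attack that equality I would first translate both sides into membership conditions on ultrafilters. Call a set $A$ \emph{thick} if it contains a left translate $Hx$ of every $H\in[G]^{<\w}$, equivalently if $G\setminus A$ is not large, and \emph{piecewise syndetic} if $FA$ is thick for some $F\in[G]^{<\w}$; by \cite[Theorem 4.40]{b7} one has $\overline A\cap K(\beta G)\neq\emptyset$ if and only if $A$ is piecewise syndetic. Since each $\overline A$ is clopen, $p\in\overline{K(\beta G)}$ holds precisely when every $A\in p$ meets $K(\beta G)$ in its closure, i.e. when every $A\in p$ is piecewise syndetic. On the other side, $p\in\Sm_G^{\wedge}$ means $G\setminus A\in p$ for each small $A$, i.e. $p$ contains no small set. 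Hence it suffices to prove the purely combinatorial statement: a subset $A\subseteq G$ is small if and only if it is not piecewise syndetic. Indeed, given this, ``$p$ contains no small set'' says exactly that no member of $p$ fails to be piecewise syndetic, which is $p\in\overline{K(\beta G)}$.

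I would prove this equivalence in two steps. For the implication ``piecewise syndetic $\Rightarrow$ not small'', suppose $FA$ is thick and, enlarging $F$, that $e\in F$; put $L=(G\setminus FA)\cup A$. Then, as $e\in F$ and $A\subseteq L$, we get $FL\supseteq L\cup FA\supseteq(G\setminus FA)\cup FA=G$, so $L$ is large, whereas $L\setminus A=G\setminus FA$ is not large because $FA$ is thick; hence $A$ is not small. The opposite implication is the delicate one. Assuming $A$ is not small, choose a large $L$ with $EL=G$ such that $L\setminus A$ is not large; then $T:=(G\setminus L)\cup A=G\setminus(L\setminus A)$ is thick. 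Given an arbitrary $H\in[G]^{<\w}$, apply thickness of $T$ to the finite set $E^{-1}H$ to obtain $x$ with $E^{-1}Hx\subseteq T$, so that $E^{-1}Hx\cap L\subseteq T\cap L\subseteq A$. For each $h\in H$ write $hx=el$ with $e\in E$ and $l\in L$ (possible since $EL=G$); then $l=e^{-1}hx\in E^{-1}Hx\cap L\subseteq A$, whence $hx\in EA$. Therefore $Hx\subseteq EA$, and as $H$ was arbitrary, $EA$ is thick and $A$ is piecewise syndetic.

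I expect the main obstacle to be exactly this last implication: one must manufacture a thick witness for $EA$ out of the mere largeness of $L$ and the failure of largeness of $L\setminus A$, and the workable idea is to test thickness of the complement $T$ not on $H$ itself but on the enlarged set $E^{-1}H$, using $EL=G$ to pull each resulting translate back into $A$. Once the equivalence ``small $\iff$ not piecewise syndetic'' is in hand, combining it with the ultrafilter reformulation of the second paragraph yields $\Sm_G^{\wedge}=\overline{K(\beta G)}$, and the duality then delivers the bornology together with both invariances.
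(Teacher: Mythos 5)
Your proposal is correct, but it is worth noting that the paper itself contains no proof of Theorem~\ref{1.1}: it is disposed of by the citation ``Theorem 4.40 from \cite{b7} in the form given in \cite[Theorem 12.5]{b15}.'' What you have done is reconstruct the missing derivation. Your reduction via the duality bullets (ideal $+$ $\Sm_G^{\wedge}\subseteq G^*$ $\Rightarrow$ bornology; $\overline{K(\beta G)}$ a two-sided ideal $\Rightarrow$ left and right invariance) is exactly the right bookkeeping, and your key combinatorial lemma --- $A$ is small if and only if $A$ is not piecewise syndetic --- is precisely the content that \cite[Theorem 12.5]{b15} adds to Hindman--Strauss Theorem 4.40 (which characterizes $\overline A\cap K(\beta G)\neq\emptyset$ by piecewise syndeticity). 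Both directions of your lemma check out: the witness $L=(G\setminus FA)\cup A$ with $e\in F$ does give $FL=G$ while $L\setminus A=G\setminus FA$ fails to be large; and in the converse, testing thickness of $T=(G\setminus L)\cup A$ on $E^{-1}H$ and pulling back through $EL=G$ correctly produces $Hx\subseteq EA$, so $EA$ is thick. The ultrafilter translation ($p\in\overline{K(\beta G)}$ iff every member of $p$ is piecewise syndetic, via clopenness of the sets $\overline A$; $p\in\Sm_G^{\wedge}$ iff $p$ contains no small set) is also sound. In short: the paper buys brevity by outsourcing everything to the literature, while your argument is self-contained modulo the single deep input (the piecewise-syndetic characterization of the closure of the minimal ideal), and it exhibits the combinatorial equivalence that makes the cited ``form'' of Theorem 4.40 true.
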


This is Theorem 4.40 from \cite{b7} in the form given in \cite[Theorem 12.5]{b15}.

\begin{theorem}\label{1.2} For every infinite group $G$, the family $\Sp_{G}$ of all sparse subsets of $G$ is a left and right invariant bornology and   $\Sp^{\wedge}  _{G}=G^{\ast}G^{\ast}$.
\end{theorem}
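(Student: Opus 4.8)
The plan is to reduce the whole theorem to one combinatorial characterisation of sparseness inside $\beta G$, namely
\[
A \text{ is sparse} \iff \overline{A}\cap G^{*}G^{*}=\emptyset ,
\]
and then to read off every assertion from the duality dictionary, using that $G^{*}G^{*}$ is a two-sided ideal of $\beta G$ contained in $G^{*}$ (it is a left ideal because $\beta G\,G^{*}\subseteq G^{*}$ and a right ideal because $G^{*}\beta G\subseteq G^{*}$, since $G^{*}$ is a two-sided ideal). The only tool I need is the standard reformulation of the product, $A\in pq\iff\{x\in G:x^{-1}A\in q\}\in p$, together with the remark that $A$ fails to be sparse exactly when there is an infinite $X\subseteq G$ with $gA\in q$ for all $g\in X$ and some $q\in G^{*}$.

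First I would prove the equivalence. If $A\in pq$ with $p,q\in G^{*}$, then $P:=\{x:x^{-1}A\in q\}$ lies in the free ultrafilter $p$, hence is infinite; putting $X:=P^{-1}$ I get $gA\in q$ for every $g\in X$, so each finite intersection $\bigcap_{g\in F}gA$ belongs to the free ultrafilter $q$ and is therefore infinite. Thus $A$ is not sparse, which proves ``$A$ sparse $\Rightarrow\overline{A}\cap G^{*}G^{*}=\emptyset$''. Conversely, if $A$ is not sparse, I fix an infinite witness $X$ for which every $\bigcap_{g\in F}gA$ is infinite; then $\{gA:g\in X\}$ together with the cofinite filter has the finite intersection property and extends to some $q\in G^{*}$, and choosing any $p\in G^{*}$ with $X^{-1}\in p$ I obtain $X^{-1}\subseteq\{x:x^{-1}A\in q\}$, so $A\in pq$ and $\overline{A}\cap G^{*}G^{*}\neq\emptyset$. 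This gives the displayed equivalence.

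Now the structural claims are immediate. Closure under subsets and finite unions follows from $\overline{A'}\subseteq\overline{A}$ and $\overline{A\cup B}=\overline{A}\cup\overline{B}$, while every finite set is sparse because its closure lies in $G$ and is disjoint from $G^{*}G^{*}\subseteq G^{*}$; hence $\Sp_{G}$ is a bornology. For left invariance note that translation by $g\in G$ is a homeomorphism of $\beta G$ with $\overline{gA}=g\overline{A}$, and $g^{-1}(G^{*}G^{*})\subseteq G^{*}G^{*}$ because $G^{*}G^{*}$ is a left ideal; so $\overline{A}\cap G^{*}G^{*}=\emptyset$ yields $\overline{gA}\cap G^{*}G^{*}=\emptyset$, i.e.\ $gA$ is sparse. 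Right invariance is symmetric, via $(G^{*}G^{*})g^{-1}\subseteq G^{*}G^{*}$. Finally, unwinding the definition of $\wedge$, an ultrafilter $p$ lies in $\Sp_{G}^{\wedge}$ iff no sparse set belongs to $p$, i.e.\ every $A\in p$ is non-sparse, i.e.\ by the equivalence every basic neighbourhood $\overline{A}$ of $p$ meets $G^{*}G^{*}$; this says precisely $p\in\overline{G^{*}G^{*}}$.

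The last computation only delivers $G^{*}G^{*}\subseteq\Sp_{G}^{\wedge}=\overline{G^{*}G^{*}}$, so the genuine difficulty of the theorem is to show that $G^{*}G^{*}$ is already closed, i.e.\ that $\overline{G^{*}G^{*}}=G^{*}G^{*}$. Concretely, given $p$ all of whose members are non-sparse, I would have to produce a single $q\in G^{*}$ with $\{x:x^{-1}C\in q\}\neq\emptyset$ for every $C\in p$; each individual $C$ supplies such a $q$, but a simultaneous choice is required. Since $C\mapsto\{x:x^{-1}C\in q\}$ is a Boolean homomorphism, once such a uniform $q$ is found the sets $\{x:x^{-1}C\in q\}$ $(C\in p)$ form a filter base extending to a free $p'$ with $p=p'q\in G^{*}G^{*}$. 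This simultaneity step — equivalently the closedness of $G^{*}G^{*}$ — is the main obstacle, and is the point at which I would appeal to the structural results on the ideals of $\beta G$ quoted before the theorem.
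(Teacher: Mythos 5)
Your core reduction is correct: the equivalence ``$A$ is sparse $\iff \overline{A}\cap G^{\ast}G^{\ast}=\emptyset$'' holds, and your two-directional verification (using $A\in pq\iff\{x\in G:x^{-1}A\in q\}\in p$ in one direction, and the finite-intersection-property extension of $\{gA:g\in X\}$ past the Fr\'echet filter in the other) is exactly the standard computation behind the cited result. The paper itself gives no internal argument for this theorem — it simply refers to Theorem 10 of \cite{b3} — so your write-up supplies more detail than the paper does. Your deductions of the bornology axioms and of two-sided invariance from the fact that $G^{\ast}G^{\ast}$ is a two-sided ideal of $\beta G$ are also sound, as is the computation that $p\in \Sp_G^{\wedge}$ iff every member of $p$ is non-sparse iff $p\in\overline{G^{\ast}G^{\ast}}$.

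The gap you flag at the end, however, is genuine and cannot be closed the way you suggest. Since $\mathcal I^{\wedge}=\bigcap_{A\in\mathcal I}\overline{G\setminus A}$ is always a closed subset of $\beta G$, the literal equality $\Sp_G^{\wedge}=G^{\ast}G^{\ast}$ asserts in particular that $G^{\ast}G^{\ast}$ is closed — and nothing ``quoted before the theorem'' provides this: the paper records only that $\overline{G^{\ast}G^{\ast}}$ is an ideal, which is weaker, and the presence of the closure bar there (as in Theorem~\ref{1.1}, where the dual of $\Sm_G$ is $\overline{K(\beta G)}$, not $K(\beta G)$) is a sign that the bar matters. Closedness of $G^{\ast}G^{\ast}$ is a separate, delicate question that the paper nowhere addresses, so your final appeal to ``the structural results on the ideals of $\beta G$'' is an appeal to facts that do not exist in the text; what your (correct) argument actually establishes is precisely $\Sp_G^{\wedge}=\overline{G^{\ast}G^{\ast}}$, which is how the duality should be read unless one independently proves $\overline{G^{\ast}G^{\ast}}=G^{\ast}G^{\ast}$. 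One further technical slip in your closing outline: nonemptiness of all the sets $\{x:x^{-1}C\in q\}$, $C\in p$, yields only some ultrafilter $r$ with $p=rq$; to guarantee $r\in G^{\ast}$ you need these sets to be infinite (or a separate argument excluding $p\in Gq$). In sum, everything you claim to prove is proved, and you have correctly isolated — but not supplied — the one statement separating your conclusion from the theorem as printed.
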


This is Theorem 10 from \cite{b3}.

More applications of this duality can be find in \cite{b16}.

Let $\mathcal{B}$ be a group bornology on $G$. By \cite{b21},  $\mathcal{B}^{\wedge}$ is
an ideal in $\beta G$  but the converse statement does not hold: $\Sm^{\wedge} _{G}$  is an ideal but
$\Sm_{G}$  is not a group bornology.

\subsection{Plenty of bornologies} We say that a left invariant bornology $\mathcal{B}$ of $G$ is {\it maximal}  if $G\notin \mathcal{B}$ but $G\in\mathcal B'$ for every left
invariant bornology  $\mathcal{B}^{\prime}$ on $G$ such that  $\mathcal{B}\subsetneq \mathcal{B}^{\prime}$.

\begin{theorem}\label{1.3} For every infinite group $G$,  of cardinality $\kappa$, there are $2^{2^{\kappa}}$  distinct maximal left  invariant bornologies on $G$.
\end{theorem}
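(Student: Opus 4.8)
The plan is to push the whole problem into the semigroup $\beta G$ through the duality recorded above and then reduce it to counting minimal left ideals. The duality already asserts that $\mathcal I\mapsto\mathcal I^{\wedge}$ is a bijection, with inverse $K\mapsto K^{\vee}$, between the left invariant bornologies on $G$ and the closed left ideals of $\beta G$ contained in $G^{*}$. This bijection reverses inclusion: enlarging $\mathcal I$ only imposes more conditions ``$G\setminus A\in p$'', hence shrinks $\mathcal I^{\wedge}$. Moreover a left invariant bornology $\mathcal B$ is proper, i.e. $G\notin\mathcal B$, exactly when $\mathcal B^{\wedge}\ne\emptyset$: if $G\in\mathcal B$ then no $p$ can contain $G\setminus G=\emptyset$, while if $G\notin\mathcal B$ the complements $\{G\setminus A:A\in\mathcal B\}$ have the finite intersection property and extend to an ultrafilter in $\mathcal B^{\wedge}$. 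Consequently the maximal proper left invariant bornologies correspond precisely to the minimal nonempty closed left ideals of $\beta G$.

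Next I would identify the minimal nonempty closed left ideals of $\beta G$ with its minimal left ideals. Every minimal left ideal $L$ is closed and satisfies $L\subseteq K(\beta G)\subseteq G^{*}$, and being minimal among all left ideals it is minimal among the closed ones as well. Conversely, every nonempty closed left ideal is in particular a left ideal and therefore contains a minimal left ideal; so a minimal closed left ideal must equal the minimal left ideal inside it. Hence maximal left invariant bornologies on $G$ are in bijection, via $L\mapsto L^{\vee}$, with the minimal left ideals of $\beta G$, and since distinct minimal left ideals are disjoint and nonempty the resulting bornologies are pairwise distinct.

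It remains to count the minimal left ideals of $\beta G$. The upper bound is automatic, since there are at most $2^{2^{\kappa}}$ subsets of $\mathcal P_{G}$ and hence at most that many bornologies. For the lower bound I would invoke the structure theory of $\beta G$ from \cite{b7}: for an infinite group $G$ with $|G|=\kappa$ the compact right-topological semigroup $\beta G$ carries $2^{2^{\kappa}}$ pairwise disjoint minimal left ideals. Together with the previous two steps this yields $2^{2^{\kappa}}$ distinct maximal left invariant bornologies, as claimed.

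I expect the single real obstacle to be this last lower bound; everything before it is formal bookkeeping with the stated duality. If one does not wish to quote the count directly, a self-contained route is to fix a minimal right ideal $R$ of $\beta G$, observe that the idempotents of $R$ are in bijection with the minimal left ideals (each minimal left ideal meets $R$ in a group whose identity is the associated idempotent), and then manufacture $2^{2^{\kappa}}$ idempotents in $R$ by an independent-family or tree argument on subsets of $G$. This route is more technical but avoids any external cardinality input.
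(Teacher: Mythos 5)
Your proposal is correct and takes essentially the same route as the paper: both pass through the stated duality between left invariant bornologies and closed left ideals of $\beta G$ contained in $G^{*}$, and both obtain the count from the Chapter~6 machinery of Hindman--Strauss (the paper cites Theorem~6.30 for $2^{2^{\kappa}}$ pairwise disjoint closed left ideals and picks a minimal closed left ideal inside each, which is exactly your identification of maximal bornologies with minimal left ideals). Your additional bookkeeping --- properness of $\mathcal B$ is equivalent to $\mathcal B^{\wedge}\neq\emptyset$, and minimal closed left ideals coincide with minimal left ideals --- only makes explicit what the paper's three-line proof leaves implicit.
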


\begin{proof}
By Theorem  6.30 from \cite{b7},  there exists a family  $\mathfrak{F}$ consisting of $|\mathfrak{F}|=2^{2^{\kappa}}$ pairwise disjoint closed left ideals in  $\beta G$.
Each $I\in \mathfrak{F}$  contains some minimal closed left ideal $L  _{I}$.
Then  $\{L  _{I}^{\vee}: I\in\mathfrak{F}\}$
 is the desired family of  maximal left  invariant bo nologies on $G$.
\end{proof}

\begin{theorem}[\cite{b12}]\label{1.4}
Every countable group $G$ admits exactly $2^{\mathfrak c}$ group  bornologies.
\end{theorem}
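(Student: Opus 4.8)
The plan is to prove the two bounds separately. For the upper bound, note that a group bornology is in particular a family of subsets of $G$, i.e.\ an element of $\mathcal P(\mathcal P(G))$. Since $G$ is countable, $|\mathcal P(G)|=\mathfrak c$ and hence $|\mathcal P(\mathcal P(G))|=2^{\mathfrak c}$; so $G$ carries at most $2^{\mathfrak c}$ group bornologies. It therefore remains to exhibit $2^{\mathfrak c}$ \emph{distinct} group bornologies, which is the substantial half.

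For the lower bound I would fix an almost disjoint family $\{D_i:i<\mathfrak c\}$ of infinite subsets of $\w$ (such a family of size $\mathfrak c$ exists), together with a sequence $(x_n)_{n\in\w}$ of distinct elements of $G$ to be chosen carefully below. For each $i<\mathfrak c$ put $A_i=\{x_n:n\in D_i\}$, and for each $T\subseteq\mathfrak c$ let $\mathcal B_T$ be the smallest group bornology containing $\{A_i:i\in T\}$, i.e.\ the group bornology \emph{generated} by these sets. Since there are $2^{\mathfrak c}$ subsets $T$ of $\mathfrak c$, it suffices to arrange that the assignment $T\mapsto\mathcal B_T$ is injective; for this it is enough to guarantee that $A_i\in\mathcal B_T$ if and only if $i\in T$, because then $i\in T\triangle T'$ already forces $\mathcal B_T\ne\mathcal B_{T'}$.

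The implication $i\in T\Rightarrow A_i\in\mathcal B_T$ is immediate. For the converse I would describe $\mathcal B_T$ explicitly: starting from $\{A_i:i\in T\}\cup[G]^{<\w}$ and closing under finite unions and the operation $(A,B)\mapsto AB^{-1}$, one sees that every member of $\mathcal B_T$ is contained in a finite union of sets of the form $F_0\,x_{m_1}^{\e_1}F_1\cdots x_{m_r}^{\e_r}F_r$ with the $F_t\in[G]^{<\w}$, the $\e_t\in\{-1,1\}$, the length $r$ bounded, and all indices $m_t$ lying in $E:=\bigcup_{j\in T'}D_j$ for some finite $T'\subseteq T$. If $i\notin T$ then $D_i$ is almost disjoint from each $D_j$ with $j\in T'$, so $D_i\cap E$ is finite; hence for all but finitely many $n\in D_i$ the element $x_n$ has index outside $E$. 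Thus the whole argument reduces to the combinatorial statement that, for such a set $E$ and every bound on the word length, all but finitely many of the $x_n$ with $n\in D_i$ fail to be short products of the $x_m$ with $m\in E$ (together with finitely many constants). Guaranteeing this is exactly what a good choice of $(x_n)$ must secure.

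The hard part is precisely the construction of the sequence $(x_n)$. One cannot appeal to a free subgroup, since a general countable group may contain none (Tarski monsters being the extreme case), so the sequence must instead be built by induction so as to be sparse in a strong sense: at stage $n$ one chooses $x_n$ outside the finite set of all words of length $\le n$ in $x_0,\dots,x_{n-1}$ and their inverses, while arranging that bounded-length products drawn from any sub-sequence stay thin enough never to absorb cofinitely many terms of another sub-sequence. The delicate point is the nonabelian phenomenon of a generator occurring several times in a relation $x_n=x_{m_1}^{\e_1}\cdots x_{m_r}^{\e_r}$ with some $m_t>n$, which cannot be removed by solving for a single letter; here one must also control, at each stage, the solution sets of the finitely many relevant short equations, using only that $G$ is infinite. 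Once such a sequence is secured, the reduction above yields $A_i\notin\mathcal B_T$ for $i\notin T$, the map $T\mapsto\mathcal B_T$ is injective, and combining with the upper bound gives exactly $2^{\mathfrak c}$ group bornologies.
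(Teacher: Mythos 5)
The survey itself contains no proof of Theorem~\ref{1.4} (it is quoted from \cite{b12}), so your attempt must be judged on its own terms, and on those terms it has a genuine gap. The upper bound and the reduction are fine: with an almost disjoint family $\{D_i\}_{i<\mathfrak c}$ and $\mathcal B_T$ the group bornology generated by $\{A_i:i\in T\}$, every member of $\mathcal B_T$ is indeed contained in a finite union of bounded-length products of the sets $A_j^{\pm1}$ ($j\in T$) and finite sets, and injectivity of $T\mapsto\mathcal B_T$ follows once $A_i\notin\mathcal B_T$ for $i\notin T$. But the entire content of the theorem is then concentrated in the existence of a sequence $(x_n)$ with the required independence, and you do not construct it: you explicitly label this ``the hard part,'' describe the obstruction (letters with index $m_t>n$, possibly occurring several times), and then assert that one must ``control the solution sets of the finitely many relevant short equations, using only that $G$ is infinite'' --- which is precisely the step where all the work lies. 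An outline ending with ``once such a sequence is secured'' leaves the theorem unproved.

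Two concrete points show the missing step is not routine. First, a quantifier problem: the bad events are indexed by the sets $E=\bigcup_{j\in T'}D_j$, and the almost disjoint family has cardinality $\mathfrak c$, so an induction of length $\omega$ cannot treat these constraints one at a time; you must isolate and verify a single uniform property --- something like: for every $r$ and every finite $K\subseteq G$, the set of $n$ with $x_n\in W_r\bigl((X\setminus\{x_n\})\cup K\bigr)$ is finite, where $W_r$ denotes products of length at most $r$ and $X=\{x_m:m\in\omega\}$ --- whereas your stated inductive clause (choose $x_n$ outside the words of length $\le n$ in $x_0,\dots,x_{n-1}$) secures much less. Second, the avoidance problem at stage $n$ involves equations $w(y)=x_j$ in which the unknown $y$ occurs several times, and in a general countable group the solution sets of such equations can be infinite (cosets of centralizers for $x_j=y a y^{-1}$; square-root sets for $x_j=y^2$ in torsion groups), so ``using only that $G$ is infinite'' one cannot simply pick $x_n$ outside them: one needs a genuine argument, e.g.\ a case analysis of words whose value is constant in $y$, or a covering lemma of B.~H.~Neumann type, to show the finitely many forbidden sets at each stage do not exhaust $G$. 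Supplying this is essentially the substance of the cited paper \cite{b12}; without it the lower bound, and hence the theorem, is not established.
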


By  Theorem 6.3.3 from \cite{b24},  each Abelian group $G$ admits exactly $2^{2^{|G|}}$  group bornologies. However this theorem does not extend to uncountable non-commutative groups. The following (consistent) counterexample was suggested by Taras Banakh.

\begin{example}\label{Banakh} Under CH there exists a group $G$ of cardinality $|G|=\mathfrak c=\w_1$ admitting exactly $2^\mathfrak c<2^{2^{\mathfrak c}}$ group bornologies.
\end{example}

\begin{proof} In \cite{Shelah} Shelah constructed a CH-example of a group $G$ of cardinality $|G|=\mathfrak c=\w_1$ such that $G=A^{6641}$ for any uncountable subset $A\subset G$. This implies that each group bornology $\mathcal B$ on  $G$ either coincides with $\mathcal P_G$ or is contained in the bornology $[G]^{\le\w}$ of countable subsets of $G$. So, the number of group bornologies of Shelah's group $G$ is $\le 2^{|[G]^{\le\w}|}=2^{\mathfrak c}$. On the other hand, Theorem~\ref{1.4} ensures that this number is $\ge 2^{\mathfrak c}$. Therefore, the number of group bornologies of Shelah's group $G$ is equal to $2^{\mathfrak c}$, which is strictly smaller than $2^{2^{\mathfrak c}}$ by known Cantor's Theorem.
\end{proof}

Theorem~\ref{1.4} implies that under the assumption $2^{\w_1}=2^\w$, each group $G$ of cardinality $|G|=\w_1$ has exactly $2^{\mathfrak c}=2^{2^{|G|}}$ group bornologies. This observation shows that the group  from Example~\ref{Banakh} cannot be constructed in ZFC.

\begin{theorem}[\cite{b16}]\label{1.5} For every infinite group $G$, the following statements hold:
\begin{itemize}
\item[(i)]  if  $\mathcal{B}$ is a left invariant bornology on $G$ and $\mathcal{B}\neq [G]^{<\w}$, then there is a
left invariant bornology  $\mathcal{B}^{\prime}$
 on $G$ such that  $[G]^{<\w}\subsetneq  \mathcal{B}^{\prime}\subsetneq \mathcal{B}$;
\item[(ii)]  if $G$  is either countable or Abelian  and $\mathcal{B}$ is a left  and right
 invariant bornology on $G$ such that
 $\mathcal{B}\neq [G]^{<\w}$
 then there is a  left and right  invariant bornology $\mathcal{B}^{\prime}$
 on $G$ such that $[G]^{<\w}\subsetneq   \mathcal{B}^{\prime}\subsetneq \mathcal{B}$;
 \item[(iii)]  if $G$  is the group $S_{\kappa}$  of all permutations of an infinite cardinal $\kappa$ then there exists a left and right   invariant bornology $\mathcal{B}$
 on $G$ such that
 $[G]^{<\w}\subsetneq   \mathcal{B}$
 and there are no left and right invariant bornologies on $G$ between $[G]^{<\w}$ and $\mathcal{B}$.
 \end{itemize}
 \end{theorem}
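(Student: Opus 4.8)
The plan is to handle the three parts by \emph{thinning} arguments for (i) and (ii) and by an explicit \emph{atomic} construction for (iii).

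For (i), since $\mathcal B\neq[G]^{<\w}$ there is an infinite $B\in\mathcal B$, and I would produce an infinite $C\subseteq B$ such that $B\setminus FC$ is infinite for every finite $F\subseteq G$; then I let $\mathcal B'$ be the left invariant bornology generated by $C$ together with the finite sets. We get $\mathcal B'\subseteq\mathcal B$ because $C\subseteq B$ and $\mathcal B$ is left invariant, $[G]^{<\w}\subsetneq\mathcal B'$ because $C$ is infinite, and $B\notin\mathcal B'$ (so $\mathcal B'\subsetneq\mathcal B$) precisely because no finite union $FC$ almost covers $B$. If $B$ is uncountable this is free: take $C$ countable, so that $FC$ is countable while $B\setminus FC$ stays uncountable. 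If $B$ is countable, only translates by $g\in\langle B\rangle$ can meet $B$, so I may assume $G=\langle B\rangle$ is countable, enumerate $G=\{g_n\}$, and build $C=\{c_k\}$ together with a disjoint infinite reservoir $\{r_k\}\subseteq B$ by interleaving: when $r_k$ is chosen I forbid all later $c$'s from the finite set $\{g_i^{-1}r_k:i\le k\}$ (possible since only finitely many values are excluded at each step and $B$ is infinite), which guarantees $r_k\notin\{g_0,\dots,g_k\}C$; since every finite $F$ sits in some initial segment, infinitely many $r_k$ avoid $FC$.

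For (ii) in the Abelian case there is nothing new: left and right invariance coincide, so the bornology $\mathcal B'$ manufactured in (i) is automatically two-sided invariant. For countable $G$ I would run the same interleaving, now forbidding later $c$'s from the finite set $\{g_i^{-1}r_kg_j^{-1}:i,j\le k\}$, so that $r_k\notin\{g_0,\dots,g_k\}C\{g_0,\dots,g_k\}$ and hence $B\setminus F_1CF_2$ is infinite for all finite $F_1,F_2$; then the \emph{two-sided} invariant bornology generated by $C$ works. The role of the hypotheses is exactly that this diagonalization needs a well-ordering of $G$ in type $\w$: for a two-sided translate $g\sigma h$ the ambient group cannot be shrunk to $\langle B\rangle$ (conjugation imports new elements), which is why uncountable non-Abelian groups escape the argument, as (iii) shows they must.

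For (iii) the main obstacle is to pin down the right generator; once it is found the verification is short. I propose to fix distinct points $p_0,p_1,p_2,\dots$ of $\kappa$, put $A_0=\{(p_0\,p_n):n\ge 1\}$ (countably many transpositions through the common point $p_0$), and let $\mathcal B$ be the two-sided invariant bornology generated by $A_0$. Properness ($G\notin\mathcal B$, whence $[G]^{<\w}\subsetneq\mathcal B$) is immediate: every bounded set lies in a finite union of translates of the countable set $A_0$, hence is countable, while $|S_\kappa|=2^\kappa$. For atomicity I would show that \emph{every} infinite $A'\subseteq A_0$ regenerates $A_0$ by finitely many conjugations (a conjugation $g(\cdot)g^{-1}$ being a two-sided translate with $F_1=\{g\}$, $F_2=\{g^{-1}\}$). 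Writing $A'=\{(p_0\,p_n):n\in S\}$ with $S$ infinite, high transitivity of $S_\kappa$ lets me choose $g$ fixing $p_0$ and mapping $\{p_n:n\in S\}$ onto $\{p_n:n\ge 1\}$, so that $gA'g^{-1}=A_0$; when $\kappa$ is uncountable the complement of the $p_n$ has cardinality $\kappa$ and one such $g$ suffices, while for $\kappa=\w$ two conjugators cover $\{p_n:n\ge 1\}$ from the infinite set $\{p_n:n\in S\}$. A general infinite $A'\in\mathcal B$ meets some translate $g_jA_0h_j$ in an infinite set, which pulls back to an infinite subset of $A_0$, so $A'$ too regenerates $A_0$; hence any two-sided invariant bornology containing an infinite member of $\mathcal B$ already contains $A_0$ and thus equals $\mathcal B$. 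The point requiring care is the existence of the conjugator $g$ as a genuine element of $S_\kappa$, i.e. that the leftover points on the two sides match in cardinality, and this is exactly where the countability of $A_0$ against the uncountability of $S_\kappa$ does the real work.
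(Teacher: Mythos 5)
This survey states Theorem~\ref{1.5} without proof, referring the reader to \cite{b16}, so there is no in-text argument to compare yours against; judged on its own merits, your proposal is essentially correct, and it is a self-contained combinatorial route rather than the duality machinery (left invariant bornologies versus closed left ideals of $\beta G$ inside $G^{\ast}$) that the survey develops in Section 1.3 and that is the toolkit of the cited source. Your thinning scheme for (i) and (ii) is sound: the observation that only translating elements $g\in BC^{-1}\subseteq\langle B\rangle$ can move $C$ into $B$ legitimately reduces the countable-$B$ case to a diagonalization over a countable enumeration; the uncountable-$B$ case is indeed free via a countable $C$; and in the Abelian case two-sided invariance comes for nothing. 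In (iii), the atom generated by $A_0=\{(p_0\,p_n):n\ge1\}$ works: conjugation by a permutation fixing $p_0$ and carrying $\{p_n:n\in S\}$ onto $\{p_n:n\ge1\}$ regenerates $A_0$ from any infinite subset, the cardinality matching of the leftover sets is exactly the crux (for uncountable $\kappa$ both complements have size $\kappa$; for $\kappa=\w$ your two-conjugator splitting of $\{p_n:n\ge 1\}$ into two infinite co-infinite halves, matched against an infinite co-infinite part of $\{p_n:n\in S\}$, is needed and suffices), and the pigeonhole pullback of a general infinite $A'\in\mathcal B$ through some $g_jA_0h_j$ to an infinite subset of $A_0$ closes the minimality argument, since conjugation and translation are available in any left and right invariant bornology.

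One repair is needed in the recursion of (i) (and its two-sided version in (ii)): forbidding only the \emph{later} $c$'s from $\{g_i^{-1}r_k:i\le k\}$ does not guarantee $r_k\notin\{g_0,\dots,g_k\}C$, because an \emph{earlier} $c_j$ with $j\le k$ may already equal $g_i^{-1}r_k$; disjointness of the reservoir from $C$ only covers the case $g_i=e$. The fix is the symmetric condition: when choosing $r_k$, pick it in $B$ outside the finite set $\{g_ic_j:i,j\le k\}$ (respectively outside $\{g_ic_lg_j:i,j,l\le k\}$ in the two-sided version), which is possible since $B$ is infinite. With that line added, every finite $F$ (after intersecting with $BC^{-1}$) lies in an initial segment of the enumeration, the tail of the reservoir witnesses that $B\setminus FC$, respectively $B\setminus F_1CF_2$, is infinite, and all three parts go through as you intended.
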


\begin{theorem}\label{1.6} Let  $G$ be an  infinite group and let $\mathcal{B}$ be  a group bornology of $G$ such that
 $\mathcal{B}\neq[G]^{<\w}$.
If $G$  is either countable or Abelian then there is a group bornology
$\mathcal{B}^{\prime}$ such that $[G]^{<\w}\subsetneq \mathcal{B}^{\prime}\subsetneq  \mathcal{B}$.
\end{theorem}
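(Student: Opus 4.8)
The plan is to show that $\mathcal B$ fails to be an atom in the lattice of group bornologies: it suffices to produce an infinite set $Y\in\mathcal B$ whose generated group bornology $\langle Y\rangle$ (the smallest group bornology containing $Y$) satisfies $\langle Y\rangle\subsetneq\mathcal B$, for then $[G]^{<\w}\subsetneq\langle Y\rangle\subsetneq\mathcal B$ automatically and $\mathcal B'=\langle Y\rangle$ is as required. First I would fix a \emph{countable} infinite $X\in\mathcal B$ with $X=X^{-1}$ and $e\in X$; one can always take $X$ countable, replacing an arbitrary infinite bounded set by the countable symmetric set it generates in one step, which stays in $\mathcal B$ since $AB^{-1}\in\mathcal B$ whenever $A,B\in\mathcal B$. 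If already $\langle X\rangle\subsetneq\mathcal B$ we finish with $Y=X$; so we may assume $\mathcal B=\langle X\rangle$, and in particular $\mathcal B$ has the countable base $\{FX^n:F\in[G]^{<\w},\ n\in\w\}$.

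Next I would reduce the Abelian case to the countable case. Put $K=\bigcup_n X^n$, the countable subgroup generated by $X$, and observe that $\mathcal B{\restriction}K$ is the group bornology on $K$ generated by $X$. When $G$ is Abelian the subgroup $K$ is normal, so any group bornology $\mathcal C$ on $K$ with $\mathcal C\subseteq\mathcal B{\restriction}K$ extends to the group bornology $\mathcal B'$ on $G$ with base $\{FC:F\in[G]^{<\w},\ C\in\mathcal C\}$; a short computation using $(FC)\cap K=(F\cap K)C$ for $C\subseteq K$ gives $\mathcal B'{\restriction}K=\mathcal C$ and $\mathcal B'\subseteq\mathcal B$. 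Hence it is enough to find $\mathcal C$ with $[K]^{<\w}\subsetneq\mathcal C\subsetneq\mathcal B{\restriction}K$: choosing $A\in(\mathcal B{\restriction}K)\setminus\mathcal C$ then yields $A\in\mathcal B\setminus\mathcal B'$, so $\mathcal B'\subsetneq\mathcal B$. In this way the Abelian case is reduced to a countable group, while for countable $G$ we simply work directly on $G$ (with no passage to a subgroup, avoiding the non-normality issues that would otherwise obstruct the extension step).

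On the countable group the base $\{FX^n\}$ is countable. If $G=FX^n$ for some $F$ and $n$ (the bounded case, where $\mathcal B$ is the full power set), then Theorem~\ref{1.4} already provides a group bornology strictly between $[G]^{<\w}$ and $\mathcal B$, since a countable group carries far more than two of them. So I may assume $\mathcal B=\langle X\rangle$ is unbounded, i.e. $X^n\subsetneq X^{n+1}$ cofinally and no $FX^n$ exhausts $G$. Here the goal becomes to build an infinite $Y\subseteq X$ with $X\notin\langle Y\rangle$; recall that $\langle Y\rangle$ has base consisting of the subsets of the finite ``words'' $F_0YF_1\cdots YF_k$ with $F_i\in[G]^{<\w}$, so the requirement is that no such bounded word recover $X$.

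The heart of the argument, and the step I expect to be the main obstacle, is exactly this unbounded countable case. I would construct $Y$ by a diagonal process: enumerate the countably many shapes $(F_0,\dots,F_k;k)$ and build $Y=\bigcup_j Y_j$ as an increasing union of finite symmetric subsets of $X$, at stage $j$ adjoining one new element of $X$ while reserving a witness $w_j\in X$ forced to lie outside the $j$-th word formed from the final $Y$. The delicate point is that each word $F_0YF_1\cdots YF_k$ depends on \emph{all} of $Y$, including elements inserted after stage $j$, so the reservations must be protected against future insertions; this is where I would concentrate the work, exploiting unboundedness (no finite union of translated powers covers $G$, nor even the infinite set $X$) to guarantee that admissible new elements and fresh witnesses always remain available. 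Countability is indispensable precisely because only countably many constraints must be satisfied, which accounts for the hypothesis of the theorem and is consistent with the failure of such a diagonalization for uncountable non-commutative groups like the one in Example~\ref{Banakh}, where every uncountable set is boundedly generating.
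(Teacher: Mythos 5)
The paper itself offers no proof of Theorem~\ref{1.6} --- it is quoted from Theorem 6.4.1 of \cite{b24} --- so your attempt must be judged on its own merits, and as written it is incomplete at exactly its decisive step. Your reductions are sound and worth keeping: the reformulation of the theorem as the statement that $\mathcal B$ is not an atom over $[G]^{<\w}$ (any intermediate $\mathcal B'$ contains an infinite bounded $Y$, and conversely $\langle Y\rangle$ is intermediate); the passage to a countable symmetric $X\ni e$ and to the case $\mathcal B=\langle X\rangle$; the extension argument reducing the Abelian case to the countable subgroup $K=\bigcup_n X^n$, where commutativity is precisely what makes the family with base $\{FC: F\in[G]^{<\w},\ C\in\mathcal C\}$ closed under $AB^{-1}$ and gives $\mathcal B'{\restriction}K=\mathcal C$; and the disposal of the bounded case via Theorem~\ref{1.4} (more than two group bornologies on a countable group, all squeezed between $[G]^{<\w}$ and $\mathcal P_G$) all check out. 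One minor slip: for non-Abelian $G$ the base of $\langle X\rangle$ is $\{F_0XF_1\cdots XF_k\}$, not $\{FX^n\}$; this costs nothing, since for countable $G$ the correct base is still countable, and you use the correct base later.

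The genuine gap is the heart of the matter, which you announce but do not carry out: constructing, for countable $G$ with $\mathcal B=\langle X\rangle$ unbounded, an infinite $Y\subseteq X$ with $X\notin\langle Y\rangle$. You flag ``protecting reservations against future insertions'' as the main obstacle, and it is not mere bookkeeping, because the implicit hope that each stage excludes only finitely many candidate elements is false. If the new letter $y$ occurs twice in a word, the bad set $\{y:\exists a\in A,\ b\in B,\ c\in C,\ ayby^{-1}c=w_j\}$ is a union of solution sets of equations $yby^{-1}=d$, i.e.\ of left cosets of centralizers, which can be infinite and even all of $G$ (in any Abelian group $yby^{-1}=b$ for every $y$; note $Y$ is symmetric, so inverse slots do occur); similarly $\{y: yby=d\}$ can be everything in a Boolean group. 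Hence a reserved witness $w_j$ must avoid not only the word over the current finite approximation $Y_j$ but the ``inevitable part'' contributed by all admissible future elements --- and since $Y\subseteq X$ while $X\subseteq F_0XF_1\cdots XF_k$ holds trivially for suitable shapes, unboundedness of $\langle X\rangle$ alone does not supply such witnesses. Their existence requires a quantitative thinness scheme for the entire future of $Y$, chosen jointly with the witnesses; that scheme is the actual mathematical content of the theorem (which is why the statement remains open for arbitrary groups, e.g.\ $G=S_\kappa$, as noted after Theorem~\ref{1.6}), and it is absent from your proposal. Everything up to that point is a correct and useful reduction, but the theorem is not yet proved.
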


This is  Theorem 6.4.1.  from \cite{b24}.
We do not know (Question 6.4.1.   in  \cite{b24})  if Theorem \ref{1.6} remains true for every infinite group $G$,  in particular, for $G= S_{\kappa}$.
\smallskip

By  \cite[Theorem 12.9]{b15},  every infinite group can be partitioned into countably many small subsets.

\begin{question}\label{1.2} Given an infinite group $G$,  do there exist  a small subset $S$  and a countable subset $A$ of $G$  such that
$\{ aS: a\in A\}$ is a  covering (partition)  of $G$?
\end{question}

This is so if $G$ is amenable or $G$  has a subgroup of countable index.

\section{Coarse groups}

\subsection{Balleans and Coarse spaces}
For a set $X$ a subset $\e\subset X\times X$ containing the diagonal $\vartriangle _{X}:= \{(x,x) : x \in X\}$ is called {\em entourage} on $X$. For two entourages $\e,\e'$ on $X$ the sets
 $$
 \varepsilon\circ\e'=\{(x,z):\exists y\in X\;\,(x,y)\in\varepsilon,\;(y,z)\in\e'\}\mbox{  and  }
 \e^{-1}=\{(y,x):(x,y)\in\e\}.
$$
also are entourages on $X$.

 A family $\mathcal{E}$ of entourages on $X$ is called a {\it ball structure}  if it satisfies two axioms:
\begin{itemize}
\item[(a)]  for any $\varepsilon, \delta\in\mathcal{E}$ the entourage $\varepsilon\circ\delta^{-1}$ is contained in some $\lambda\in\mathcal E$;
\item[(b)]  $\bigcup\mathcal E=X\times X$.
\end{itemize}
A ball structure $\mathcal E$ on $X$ is called a {\em coarse structure} if it satisfies one additional axiom:
\begin{itemize}
\item[(c)] if $\varepsilon\in\mathcal{E}$ and $\vartriangle_{X}\subseteq \delta\subseteq\varepsilon$, then $\delta\in\mathcal{E}$.
\end{itemize}
It follows that each coarse structure is a ball structure and each ball structure $\mathcal E$ on $X$ is a base of the unique coarse structure
$${\downarrow}\mathcal E=\{\delta:\exists\e\in\mathcal E\;\vartriangle_X\subseteq\delta\subseteq \e\}.$$
A subfamily $\mathcal{E}^{\prime}\subseteq\mathcal{E}$ is called a {\it base} of a coarse structure $\mathcal{E}$ if for
every $\varepsilon\in\mathcal{E}$ is contained in some $\delta\in\mathcal{E}^{\prime}$.

A {\em ballean} is a pair $(X, \mathcal{E})$ consisting of a set $X$ and a ball structure $\mathcal E$ on $X$. If the ball structure $\mathcal E$ is a coarse structure, then the ballean $(X,\mathcal E)$ is called a {\em coarse space}.
 We note that coarse spaces can be considered as
 asymptotic counterparts of uniform spaces and balleans could
  be defined in terms of balls, see \cite{b15}, \cite{b24}.

Let $\mathcal E$ be a ball structure on a set $X$. For every
$x\in X$  a $\varepsilon\in\mathcal{E}$ the set
 $B(x, \varepsilon):= \{y\in X: (x,y)\in\varepsilon \}$ is called the {\it ball of radius  $\varepsilon$ centered at $x$.}

 A subset $Y$ of $X$ is called {\it bounded} if there exist $x\in X$ and $\varepsilon\in\mathcal{E}$
   such that $Y\subseteq B(x, \varepsilon)$.  The coarse
   structure
    $\mathcal{E}=\{\varepsilon\in X\times X: \bigtriangleup_{X}\subseteq\varepsilon\}$ is
     the unique coarse structure such that  $(X,\mathcal{E})$  bounded.

For a ballean $(X, \mathcal{E})$, each subset $Y \subseteq X$ carries the induced ball structure $\mathcal{E}{\restriction}{Y}:= \{\varepsilon\cap(Y\times Y): \varepsilon\in\mathcal{E}\}$. The ballen $(Y, \mathcal{E}{\restriction}{Y})$
 is called a {\it subballean} of $(X, \mathcal{E})$.

A subset $Y$ of a ballean $X$  is called {\it large} (or {\it  coarsely dense})  if there exists $\varepsilon\in \mathcal{E}$  such that $X= B(Y, \varepsilon)$ where $B(Y, \varepsilon)=\bigcup_{y\in Y} B(y, \varepsilon)$.

Let  $(X, \mathcal{E})$, $(X^{\prime}, \mathcal{E}^{\prime})$ be balleans.
A mapping
$f: X\to X^{\prime}$
is called {\it coarse} if for every
$\varepsilon\in\mathcal{E}$
there exists $\varepsilon^{\prime}\in\mathcal{E}'$ such that, for every $x\in X$,  we have
$f(B(x,\varepsilon))\subseteq B(f(x),\varepsilon^{\prime})$.
If $f$ is surjective and coarse, then $(X^{\prime}, \mathcal{E}^{\prime})$
 is called a {\it coarse image} of $(X, \mathcal{E})$.
If $f$ is a  bijection such that $f$  and $f^{-1}$ are coarse mappings,   then $f$ is called an
 {\it asymorphism}.
Two balleans
$(X, \mathcal{E})$, $(X^{\prime}, \mathcal{E}^{\prime})$
 are called {\it coarsely equivalent} if there exist large subsets
 $Y\subseteq X$, $Y^{\prime}\subseteq X^{\prime}$ such that the balleans
 $(Y, \mathcal{E}{\restriction}{Y})$
 and $(Y^{\prime}, \mathcal{E}^{\prime}{\restriction}{Y^{\prime}})$
  are asymorphic.

To conclude the coarse vocabulary, we take a family
$\{(X_{\alpha}, \mathcal{E}_{\alpha}) :  \alpha< \kappa\}$
  of balleans and define their
  {\it product}
 $\prod_{\alpha< \kappa}(X_{\alpha}, \mathcal{E}_{\alpha})$
  as the Cartesian product $\prod_{\alpha< \kappa} X_{\alpha}$
 endowed with the ball structure consisting of the entourages
 $$\{((x_\alpha),(y_\alpha)):\forall\alpha\in A \;(x_\alpha,y_\alpha)\in \e_\alpha\}$$where $(\e_{\alpha})_{\alpha\in A}\in\prod_{\alpha\in A}\mathcal E_\alpha$.

For lattices of coarse structures and varieties of coarse spaces, see \cite{b17} and \cite{b14}.

For every ballean $(X, \mathcal{E})$, the family of all bounded  subsets of  $X$  is a bornology.
On the other hand, for every bornology $\mathcal{B}$ on $X$, there is the   smallest by inclusion coarse structure $\mathcal{E} _{\mathcal{B}}$  on $X$ such that  $\mathcal{B}$ is the  bornology
 of all bounded subsets of $(X, \mathcal{E} _{\mathcal{B}})$.
A coarse structure $\mathcal{E}$  on $X$   is of the form  $\mathcal{E} _{\mathcal{B}}$  if and only if $(X, \mathcal{E})$  is {\it thin}: for every $\varepsilon \in\mathcal{ E}$,   there exists a bounded subset $A$  of  $(X, \mathcal{E})$
  such that $B(x,\varepsilon  )= \{ x\}$
for all $x\in X\setminus  A$. Thin coarse structures alternatively are called {\em discrete}.


\subsection{Coarse structures on groups} 
We remind that a bornology $\mathcal{I}$ on a group $G$ is  a {\it group bornology} if  and $AB^{-1}\in\mathcal{I}$
   for all $A,B\in \mathcal{I}$.
   A group bornology $\mathcal{I}$ is called {\it invariant} if
$\bigcup _{g\in G} g^{-1}A g\in\mathcal{I}$  for each $A\in \mathcal{I}$.

Let $X$ be a $G$-space with an action  $G\times  X\to X$, $(g,x)\mapsto gx$, of a group $G$.
We assume that $G$ acts on $X$ transitively,  take a group bornology
 $\mathcal{I}$ on $G$  and consider the coarse structure $\mathcal{E}(G, \mathcal{I}, X)$  on $X$ generated by the base consisting of the entourages $\varepsilon_{A}:=\{(x, y)\in X\times X:y\in\{x\}\cup Ax\}$. In this case $B(x,\varepsilon_A)=\{x\}\cup Ax$.

By \cite[Theorem 1]{b9}, for every coarse structure  $\mathcal{E}$ on $X$,
 there exist a group $G$ of permutations of $X$ and a group ideal $\mathcal{I}$ on $G$  such that
 $\mathcal{E}= \mathcal{E} (G, \mathcal{I}, X)$.

Now let $X=G$  and $G$  acts on $X$  by the left shifts.
We denote
$\mathcal{E}_{\mathcal{I}}= \mathcal{E} (G, \mathcal{I}, G)$.
Thus, every group bornology $\mathcal{I}$ on $G$ turns $G$ into the coarse space
$(G, \mathcal{E}_{\mathcal{I}})$.
We note that a subset $A$ of $G$ is bounded in $(G, \mathcal{E}_{\mathcal{I}})$ if and only if  $A\in\mathcal{I}$.

A group $G$ endowed with a coarse structure $\mathcal{E}$  is called {\it left (right) coarse group} if, for every $\varepsilon\in \mathcal{E}$,  there exists $\varepsilon^{\prime}\in \mathcal{E}$
 such that $g B(x,\varepsilon)\subseteq B(gx,\varepsilon^{\prime})$ (resp. $B(x,\varepsilon)g \subseteq B(xg,\varepsilon^{\prime})$~)
  for all $x, g\in G$.
  Equivalently,  $(G, \mathcal{E})$ is a left (right)  coarse group if  $\mathcal{E}$  has a  base consisting of left (right)   invariant entourages. An entourage $\varepsilon $  is  {\em left} ({\em right})  {\em invariant} if $g\varepsilon  =\varepsilon$ (resp. $\varepsilon g=g$)  for each $g\in G$,  where $g\varepsilon = \{(gx, gy): (x,y)\in \varepsilon)\}$ and $\varepsilon g = \{(xg, yg): (x,y)\in \varepsilon)\}$.
    For finitely generated groups, the right coarse groups $(G, \mathcal{E}_{[G]<\omega})$ in metric form paly an important role in
Geometrical Group Theory, see \cite[Chapter 4]{b5}.

A group $G$  endowed with a coarse structure $\mathcal{E}$  is called a
{\it coarse group} if the group multiplication $(G, \mathcal{E})\times (G, \mathcal{E})\to (G, \mathcal{E})$, $ (x,y)\mapsto xy$,  and the   inversion
$(G, \mathcal{E})\to (G, \mathcal{E})$, $x\mapsto x^{-1}$,
  are coarse mappings. In this case, $\mathcal{E}$ is called a {\it group coarse structure.}

The following two statements are taken from  \cite{b20}, see also \cite[Chapter 6]{b24}.

\begin{proposition}\label{2.1}
A group $G$ endowed with a coarse structure  $\mathcal{E}$ is a right coarse group if and  only if there exists a group bornology $\mathcal{I}$ on $G$ such that $\mathcal{E}=\mathcal{E}_{\mathcal{I}}$.
\end{proposition}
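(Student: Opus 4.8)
The plan is to exploit the bijective correspondence between right invariant entourages on $G$ and subsets of $G$ containing the identity, via $\varepsilon\mapsto B(e,\varepsilon)$. The key computation is that a right invariant entourage $\varepsilon$ satisfies $(x,y)\in\varepsilon\iff(e,yx^{-1})\in\varepsilon\iff yx^{-1}\in B(e,\varepsilon)$, so that $\varepsilon$ coincides with the base entourage $\varepsilon_A$ of $\mathcal{E}_{\mathcal{I}}$ for $A=B(e,\varepsilon)$ (note $e\in B(e,\varepsilon)$ since $\vartriangle_G\subseteq\varepsilon$, and $\varepsilon_A=\{(x,y):yx^{-1}\in A\cup\{e\}\}$). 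Almost everything reduces to this identity.

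For the ``if'' direction I would start from $\mathcal{E}=\mathcal{E}_{\mathcal{I}}$ and verify that every base entourage $\varepsilon_A$ is right invariant: if $yx^{-1}\in A\cup\{e\}$ then $(yg)(xg)^{-1}=yx^{-1}\in A\cup\{e\}$, so $\varepsilon_A g=\varepsilon_A$. Thus $\mathcal{E}_{\mathcal{I}}$ has a base of right invariant entourages, which by the stated characterization means $(G,\mathcal{E}_{\mathcal{I}})$ is a right coarse group.

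For the ``only if'' direction I would let $\mathcal{I}$ be the bornology of all bounded subsets of $(G,\mathcal{E})$; this is a bornology by the remark preceding the statement, and it is forced, since a set is bounded in $(G,\mathcal{E}_{\mathcal{I}})$ precisely when it lies in $\mathcal{I}$. The crucial preliminary lemma is that, because $\bigcup\mathcal{E}=G\times G$ and coarse structures are closed under composition, every bounded set is contained in a ball centred at the identity: if $A\subseteq B(x_0,\delta)$, choose $\eta\in\mathcal{E}$ with $(e,x_0)\in\eta$ and observe $A\subseteq B(e,\eta\circ\delta)$. With this lemma, $\mathcal{E}=\mathcal{E}_{\mathcal{I}}$ follows from the correspondence above. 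A right invariant $\varepsilon\in\mathcal{E}$ equals $\varepsilon_{B(e,\varepsilon)}$ with $B(e,\varepsilon)\in\mathcal{I}$; since such $\varepsilon$ form a base of $\mathcal{E}$, this yields $\mathcal{E}\subseteq\mathcal{E}_{\mathcal{I}}$. Conversely, for $A\in\mathcal{I}$ pick a right invariant $\mu\in\mathcal{E}$ with $A\subseteq B(e,\mu)$; then $A\cup\{e\}\subseteq B(e,\mu)$ gives $\varepsilon_A\subseteq\varepsilon_{B(e,\mu)}=\mu$, so $\varepsilon_A\in\mathcal{E}$ by axiom (c), and hence $\mathcal{E}_{\mathcal{I}}\subseteq\mathcal{E}$.

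The step I expect to be the main obstacle is checking that $\mathcal{I}$ is a \emph{group} bornology, i.e.\ $AB^{-1}\in\mathcal{I}$ whenever $A,B\in\mathcal{I}$, since this is the only place where right invariance is used in an essential rather than formal way. Here I would take right invariant $\varepsilon_1,\varepsilon_2\in\mathcal{E}$ with $A\subseteq B(e,\varepsilon_1)$ and $B\subseteq B(e,\varepsilon_2)$; for $a\in A$ and $b\in B$ the relations $(e,a)\in\varepsilon_1$ and $(e,b)\in\varepsilon_2$ give, after right translation by $b^{-1}$, $(b^{-1},ab^{-1})\in\varepsilon_1$ and $(b^{-1},e)\in\varepsilon_2$, whence $(e,ab^{-1})\in\varepsilon_2^{-1}\circ\varepsilon_1$. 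Thus $AB^{-1}\subseteq B(e,\varepsilon_2^{-1}\circ\varepsilon_1)$ is bounded. Along the way one should confirm that $\varepsilon^{-1}$ and $\varepsilon\circ\delta$ again belong to $\mathcal{E}$, which follows routinely from axioms (a) and (c).
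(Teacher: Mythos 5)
Your proposal is correct, and there is nothing to compare it against line by line: the paper gives no proof of Proposition~\ref{2.1}, merely citing \cite{b20}, and your argument via the correspondence $\varepsilon\mapsto B(e,\varepsilon)$ between right invariant entourages and members of the bornology (with the verification that $AB^{-1}\subseteq B(e,\varepsilon_2^{-1}\circ\varepsilon_1)$ using right invariance) is exactly the standard route one expects from that source. The two routine points you flag --- recentering bounded sets into balls at $e$ using $\bigcup\mathcal E=G\times G$, and closure of $\mathcal E$ under $\varepsilon^{-1}$ and $\varepsilon\circ\delta$ via axioms (a) and (c) --- are handled correctly, so the proof is complete as written.
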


\begin{proposition}\label{2.2}
For a group $G$ endowed with a coarse structure  $\mathcal{E}$, the following conditions are equivalent:
\begin{enumerate}
\item[(i)]	$(G,\mathcal{E})$  is a coarse group;
\item[(ii)] $(G,\mathcal{E})$ is  left and right coarse  group;
\item[(iii)] there exists an invariant group bornology  $\mathcal{I}$ on $G$  such that $\mathcal{E}=\mathcal{E}_{\mathcal{I}}$.
\end{enumerate}
\end{proposition}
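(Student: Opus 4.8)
The plan is to establish the cyclic chain of implications (i)$\Rightarrow$(ii)$\Rightarrow$(iii)$\Rightarrow$(i), using Proposition~\ref{2.1} as the bridge between right coarse groups and group bornologies, so that the only genuinely new content is locating where the \emph{invariance} of the bornology enters and exits. Throughout I will exploit that, by axiom (c), the diagonal $\vartriangle_G$ belongs to every coarse structure $\mathcal{E}$, so that $B(g,\vartriangle_G)=\{g\}$, and that $\mathcal{E}_{\mathcal{I}}$ carries the base $\{\varepsilon_A:A\in\mathcal{I}\}$ with $B(x,\varepsilon_A)=\{x\}\cup Ax$.

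For (i)$\Rightarrow$(ii) I would use only that multiplication $m\colon(G,\mathcal{E})\times(G,\mathcal{E})\to(G,\mathcal{E})$ is coarse. Feeding the product entourage $\varepsilon\times\vartriangle_G$ into the definition of a coarse map produces a single $\lambda\in\mathcal{E}$ with $m(B(x,\varepsilon)\times\{g\})=B(x,\varepsilon)g\subseteq B(xg,\lambda)$ for all $x,g$; since $\lambda$ does not depend on $g$, this is precisely the right coarse group condition. Symmetrically, the entourage $\vartriangle_G\times\varepsilon$ yields $gB(x,\varepsilon)\subseteq B(gx,\lambda')$ for all $x,g$, i.e. the left coarse group condition. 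Thus (ii) holds, and notably inversion is not needed here.

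For (ii)$\Rightarrow$(iii) the right coarse hypothesis together with Proposition~\ref{2.1} supplies a group bornology $\mathcal{I}$ with $\mathcal{E}=\mathcal{E}_{\mathcal{I}}$, and it remains to prove $\mathcal{I}$ invariant. I would test the left coarse condition on a base entourage $\varepsilon_A$, $A\in\mathcal{I}$: it yields some $\varepsilon'\in\mathcal{E}_{\mathcal{I}}$, which I may enlarge to a base element $\varepsilon_{A'}$ with $A'\in\mathcal{I}$, such that $gB(x,\varepsilon_A)\subseteq B(gx,\varepsilon_{A'})$ for \emph{all} $g,x$. Using $g(Ax)=(gAg^{-1})(gx)$ and specializing $x=e$ gives $gag^{-1}\in\{e\}\cup A'$ for every $g\in G$ and $a\in A$, hence $\bigcup_{g\in G}gAg^{-1}\subseteq\{e\}\cup A'\in\mathcal{I}$, so $\bigcup_{g\in G}g^{-1}Ag\in\mathcal{I}$. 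This is the main obstacle: the quantifier structure is everything, since it is the fact that one $\varepsilon_{A'}$ works \emph{uniformly in $g$} that upgrades ``each conjugate $gAg^{-1}$ is bounded'' to ``the union of all conjugates is bounded'', which is exactly invariance.

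For (iii)$\Rightarrow$(i), with $\mathcal{I}$ invariant I set $A^*:=\bigcup_{g\in G}gAg^{-1}\in\mathcal{I}$ and verify both maps are coarse on the base. For multiplication, expanding a general product $x'y'$ with $x'\in\{x\}\cup Ax$ and $y'\in\{y\}\cup By$ via $x'y'=(a\,xbx^{-1})(xy)$ shows $x'y'\in C(xy)$ for $C:=(\{e\}\cup A)(\{e\}\cup B^*)$, and $C\in\mathcal{I}$ because a group bornology is closed under inverses and products. For inversion, $(\{x\}\cup Ax)^{-1}=\{x^{-1}\}\cup x^{-1}A^{-1}$ and the inclusion $x^{-1}A^{-1}x\subseteq (A^{-1})^*$ place the image inside $B(x^{-1},\varepsilon_{(A^{-1})^*})$, with $(A^{-1})^*\in\mathcal{I}$ by invariance. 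Closing the cycle then yields all three equivalences; these final computations are routine once the role of invariance, isolated in the previous paragraph, is in hand.
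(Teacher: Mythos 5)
Your proof is correct: each implication checks out, including the two delicate points — that $\varepsilon\otimes\vartriangle_G$ and $\vartriangle_G\otimes\varepsilon$ are legitimate entourages of the product coarse structure (the diagonal lies in $\mathcal E$ by axiom (c)), and that the uniformity of $\varepsilon_{A'}$ in $g$ in the left coarse condition, specialized at $x=e$, is what bounds $\bigcup_{g\in G}gAg^{-1}$ by $\{e\}\cup A'$ and yields invariance. The paper itself gives no proof of this proposition (it is quoted from the reference on group balleans), and your route — cycling (i)$\Rightarrow$(ii)$\Rightarrow$(iii)$\Rightarrow$(i) with Proposition 2.1 as the bridge — is precisely the standard argument intended by that citation.
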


Applying Theorem 1.4, we get $2^{2^{\aleph_{0}}}$ distinct right coarse structures on any countable group.
For every infinite group $G$  and any infinite cardinal $\kappa\leq |G|$, the bornology  $[G]^{<\kappa}$  defines an unbounded right coarse structure  on $G$.
But if $G$ has only two conjugated  classes then there is  only one, bounded, group coarse structure on $G$.


\subsection{Asymorphisms.}
For an infinite cardinal $\kappa$, we say that two groups
$G$  and $H$ are $\kappa$-{\it asymorphic} (resp. $\kappa$-{\em coarsely equivalent}) if the right coarse structures on $G$  and $H$ defined by the bornologies $[G] ^{<\kappa}$  and  $[H] ^{<\kappa}$ is asymorphic  (resp. coarsely equivalent). In the case $\kappa=\aleph_{0}$,   $G$  and $H$ are  called {\em finitarily  asymorphic} and {\em  finitarily  coarsely equivalent}, respectively.

Let us recall that a group $G$ is {\em locally finite} if each finite subset of $G$ is contained in finite subgroup of $G$. A classification of countable locally finite groups  up to  finitary asymorphisms is obtained in \cite{b11} (cf.  \cite[p. 103]{b15}).

\begin{theorem} Two countable locally finite groups $G_{1}$ and $G_{2}$ are finitarily  asymorphic if and only if the following conditions hold:
\begin{itemize}
\item[(i)] for every finite subgroup $F_1\subset G_{1}$, there exists a finite subgroup $F_2$ of $G_{2}$ such that $|F_1|$ is a divisor of $|F_2|$;
\item[(ii)] for every finite subgroup $F_2$ of $G_{2}$, there exists a finite subgroup $F_1$ of $G_{1}$ such that $|F_2|$ is a divisor of $|F_1|$.
\end{itemize}
\end{theorem}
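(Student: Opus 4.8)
The plan is to reduce both coarse groups to nested systems of coset partitions and then prove the two implications separately; the reverse implication requires an explicit construction of an asymorphism and is where essentially all the work lies.

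First I would record the structural normal form. Since $G_i$ is countable and locally finite, write $G_i=\bigcup_{n}F^{(i)}_n$ as an increasing union of finite subgroups (take $F^{(i)}_n$ to be a finite subgroup containing $F^{(i)}_{n-1}$ together with the $n$-th element of a fixed enumeration of $G_i$). For a subgroup $F$ one has $B(x,\varepsilon_F)=\{x\}\cup Fx=Fx$, and the relation $\{(x,y):y\in Fx\}=\varepsilon_F$ is an equivalence relation whose classes are the cosets $Fx$, each of cardinality $|F|$. Because every finite subset of $G_i$ lies in some $F^{(i)}_n$, the family $\{\varepsilon_{F^{(i)}_n}\}_n$ is a base of $\mathcal E_{[G_i]^{<\w}}$; the same remains true for any cofinal subchain, so we are free to pass to subchains without changing the coarse structure. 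In these terms $(G_i,\mathcal E_{[G_i]^{<\w}})$ is exactly the discrete ``tree'' given by the descending tower of coset partitions, and conditions (i)--(ii) say precisely that the two towers have the same Steinitz (supernatural) number $\prod_p p^{a_p}$ with $a_p=\sup_n v_p(|F^{(i)}_n|)$.

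For the forward implication I would argue as follows. Let $f\colon G_1\to G_2$ be an asymorphism and fix a finite subgroup $F\subseteq G_1$. Coarseness of $f$ applied to $\varepsilon_F$ yields, via the base of $\mathcal E_{[G_2]^{<\w}}$, a finite subgroup $H\subseteq G_2$ with $f(Fx)\subseteq Hf(x)$ for all $x$; that is, $f$ carries each $F$-coset into a single $H$-coset. Hence for a fixed coset $Hy$ and any $F$-coset $C$ we have either $f(C)\subseteq Hy$ or $f(C)\cap Hy=\varnothing$, so $f^{-1}(Hy)$ is a disjoint union of whole $F$-cosets. Comparing cardinalities, $|H|=|f^{-1}(Hy)|$ is a multiple of $|F|$, which is exactly (i) with $F_2=H$. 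Applying the same counting to $f^{-1}$ gives (ii).

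The hard part is the reverse implication, for which I would build an asymorphism by hand. Using (i) and (ii) alternately I would first extract cofinal subchains $A_1\subseteq A_2\subseteq\cdots$ of $G_1$ and $B_1\subseteq B_2\subseteq\cdots$ of $G_2$ whose orders interleave into a single divisibility chain $|A_1|\mid|B_1|\mid|A_2|\mid|B_2|\mid\cdots$; this is possible because (i) produces, for each $A_k$, a subgroup of $G_2$ (hence some $B_m$) of order divisible by $|A_k|$, and (ii) does the symmetric step, while enlarging indices keeps the subchains cofinal. The goal is then a bijection $f\colon G_1\to G_2$ such that each $A_k$-coset maps into one $B_k$-coset and each $B_k$-coset pulls back into one $A_{k+1}$-coset; granting this, $f$ and $f^{-1}$ are coarse exactly as in the forward step, so $f$ is an asymorphism. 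To produce $f$ I would match the two towers block by block: inside one $A_{k+1}$-coset (of size $|A_{k+1}|$) I send its $|A_{k+1}|/|A_k|$ sub-$A_k$-cosets onto whole $B_k$-cosets, grouping them in bunches of $|B_k|/|A_k|$ to fill each $B_k$-coset, and arrange these images to exhaust one $B_{k+1}$-coset. The divisibility chain makes every count here ($|B_k|/|A_k|$, $|A_{k+1}|/|B_k|$, $|B_{k+1}|/|A_{k+1}|$) an integer, and the homogeneity of the towers (all cosets at a given level are interchangeable) guarantees the partial matchings are mutually consistent. The genuine obstacle is organizing this as a single back-and-forth recursion that is simultaneously total and surjective while respecting the nesting in both directions; I expect the verification that the recursion never gets stuck --- i.e. that at each stage an unused target block of the right size is always available --- to be the technical heart of the argument, with the integrality of all the branching ratios being the fact that makes it go through.
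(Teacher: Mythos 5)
Your proposal is correct and, in substance, follows the same route as the source of this theorem: the paper contains no proof of it, quoting the classification from \cite{b11} (cf.\ \cite[p.~103]{b15}), where it is likewise obtained by presenting each countable locally finite group as the union of an increasing tower of finite subgroups and matching the two towers along an interleaved divisibility chain (whence also the paper's remark that every such group is finitarily asymorphic to a direct sum of finite cyclic groups). Your forward implication is complete (the counting argument via $f^{-1}(Hy)$ being a disjoint union of whole $F$-cosets is exactly right), and the extraction of the chain $|A_1|\mid|B_1|\mid|A_2|\mid\cdots$ works because Lagrange's theorem preserves divisibility when subgroups are enlarged for cofinality. The one step you leave unexecuted --- that the block-matching recursion ``never gets stuck'' --- is genuinely routine and needs no back-and-forth at all: choosing right transversals $T_{k+1}$ of $A_k$ in $A_{k+1}$ gives each $g\in G_1$ a unique factorization $g=t_1t_2\cdots t_n$ with $t_k\in T_k$, hence a mixed-radix bijection $G_1\to\mathbb N$ under which the $A_k$-cosets become precisely the intervals $\big[m|A_k|,(m+1)|A_k|\big)$; doing the same for $G_2$ with the $B_k$ and taking $f$ to be the composition $G_1\to\mathbb N\to G_2$, the divisibility $|A_k|\mid|B_k|$ forces each $A_k$-coset into a single $B_k$-coset, and $|B_k|\mid|A_{k+1}|$ forces each $B_k$-coset to pull back into a single $A_{k+1}$-coset, so $f$ is an asymorphism with totality and surjectivity automatic. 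Equivalently, your direct recursion also closes once normalized by $f(e)=e$: extending $f$ from $A_k$ to $A_{k+1}$ so that the image is a union of full $B_k$-cosets containing $B_k$ is always possible by the integrality of $|B_k|/|A_k|$ and $|A_{k+1}|/|B_k|$, and $f(A_{k+1})\supseteq B_k$ together with $\bigcup_k B_k=G_2$ yields surjectivity.
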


It follows that there are continuum many distinct types of countable locally finite groups and each group is finitarily
asymorphic to some direct sum of finite cyclic groups.

The following coarse classification of countable Abelian groups is obtained in \cite{b2}.
\vspace{3 mm}

\begin{theorem} Two countable groups $G$  and $H$ are finitarily
  coarsely equivalent if  and only if the torsion-free ranks of $G$  and $H$
coincide  and $G, H$  are either  both finitely generated or infinitely generated.
\end{theorem}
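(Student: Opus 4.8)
The plan is to isolate two coarse invariants that, taken together, detect the right-hand side, and then to build explicit finitary coarse equivalences for the sufficiency. Throughout I treat $G,H$ as countable \emph{Abelian} groups, as in the quoted classification, and I write their finitary coarse structure as $\mathcal E_{[G]^{<\w}}$.

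For the necessity I would first prove that, for a countable Abelian group $G$, the asymptotic dimension of $(G,\mathcal E_{[G]^{<\w}})$ equals the torsion-free rank $r_0(G)$. The lower bound $\mathrm{asdim}\,G\ge r_0(G)$ comes from coarsely embedding $\mathbb Z^k$ (generated by $k$ independent elements of infinite order) for each finite $k\le r_0(G)$, using that an injective homomorphism of countable groups is a finitary coarse embedding and that $\mathrm{asdim}\,\mathbb Z^k=k$. For the upper bound, the torsion subgroup $T$ is locally finite, and partitioning $G$ into cosets of the finite subgroup generated by any prescribed finite set shows $\mathrm{asdim}\,T=0$; meanwhile $G/T$ is torsion-free of rank $r_0(G)$, hence embeds into $\mathbb Q^{r_0(G)}=\bigcup_n\frac1{n!}\mathbb Z^{r_0(G)}$, which has asymptotic dimension $r_0(G)$. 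A Hurewicz-type extension inequality for the finitary structure then yields $\mathrm{asdim}\,G\le \mathrm{asdim}\,T+\mathrm{asdim}(G/T)=r_0(G)$. Since asymptotic dimension is a coarse invariant, finitarily coarsely equivalent groups have equal torsion-free rank. To separate the finitely generated from the infinitely generated case I would use that $\mathcal E_{[G]^{<\w}}$ is \emph{monogenic} --- there is one entourage $\varepsilon$ whose iterated compositions $\{\varepsilon^n:n\ge1\}$ are cofinal --- precisely when some finite set generates $G$ (if $\varepsilon_A^n\supseteq\varepsilon_{\{b\}}$ then $b\in\langle A\rangle$); monogenicity passes to large subspaces and across asymorphisms, so it is a coarse invariant, and the dichotomy follows.

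For the sufficiency I would argue by cases. If $G$ and $H$ are both finitely generated, each is $\mathbb Z^r\oplus F$ with $F$ finite, and the projection onto $\mathbb Z^r$ has finite, hence bounded, fibres; it is therefore a finitary asymorphism onto a large subset, so both groups are finitarily coarsely equivalent to $\mathbb Z^r$ and hence to each other. If $G$ and $H$ are both infinitely generated of common rank $r$, I would fix a model $M_r$ --- namely $\bigoplus_n\mathbb Z$ when $r=\aleph_0$, and $\mathbb Z^r\oplus\bigoplus_n\mathbb Z/2$ when $r$ is finite --- and show each of $G,H$ is finitarily coarsely equivalent to $M_r$. The construction exhausts $G=\bigcup_n G_n$ by an increasing chain of finitely generated subgroups (whose ranks stabilise at $r$, or tend to $\infty$ when $r=\aleph_0$) and assembles a coarse equivalence by \emph{interleaving} this chain with the analogous chain for $M_r$ after telescoping both to a common shape: at each stage the two blocks are finitely generated of the same rank, they are matched by a bijection controlled by a single entourage in both directions, and the partial matchings are glued into one map.

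The main obstacle is exactly this interleaving in the infinitely generated case. First, the stagewise bijections must be controlled by one entourage \emph{uniformly} in $n$, so that the glued map and its inverse are genuinely controlled and the image is large, rather than the matching holding only stage by stage. Second, and more delicately, one must show that the \emph{source} of infinite generation is coarsely invisible: a group whose infinite generation comes from growing torsion, as in $\mathbb Z^r\oplus\bigoplus_n\mathbb Z/2$, must be matched with one having no torsion at all but unbounded divisibility, as in $\mathbb Q$ of rank $1$, where $G_n=\frac1{n!}\mathbb Z\cong\mathbb Z$ and $[G_{n+1}:G_n]=n+1$. Here the missing torsion filler is supplied by the cosets produced by the index blow-ups $[G_{n+1}:G_n]\to\infty$, and after telescoping each chain so that the resulting finite blocks grow in step, these cosets match the torsion blocks of the model up to uniform control. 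Establishing this uniform matching for the rank-$0$ subcase --- all infinite countable locally finite Abelian groups are finitarily coarsely equivalent --- and then superimposing it on the $\mathbb Z^r$-skeleton delivers the general finite-rank case, with the $r=\aleph_0$ case handled by the same device applied to $\bigoplus_n\mathbb Z$.
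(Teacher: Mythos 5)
You should first note that the paper itself contains no proof of this theorem: it is quoted verbatim from \cite{b2} (Banakh, Cencelj, Repov\v s, Zarichnyi), so your attempt can only be measured against that published proof. Your necessity half is sound and is essentially the known route. The observation that an injective homomorphism is effectively proper for the finitary coarse structures (since $f(x)^{-1}f(y)\in B$ forces $x^{-1}y\in f^{-1}(B)$, a finite set) correctly gives $\mathrm{asdim}\,G\ge r_0(G)$; the upper bound via $\mathrm{asdim}\,T=0$, the embedding $G/T\hookrightarrow\mathbb Q^{r_0(G)}$ and a Hurewicz-type extension inequality re-derives Smith's theorem $\mathrm{asdim}\,G=r_0(G)$ for countable Abelian $G$, and the needed extension and union theorems do exist for countable (not necessarily finitely generated) groups. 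Likewise, monogenicity of $\mathcal E_{[G]^{<\omega}}$ is equivalent to finite generation and is a coarse invariant, so the finitely generated/infinitely generated dichotomy is detected coarsely. These are exactly the invariants used in the literature.

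The genuine gap is in sufficiency, precisely where you flag ``the main obstacle'': the interleaving step is not a proof but a restatement of the theorem's hard core. What must be produced is a single map $f:G\to M_r$ (onto a large subset) such that for \emph{every} finite $A\subset G$ there is a finite $A'\subset M_r$ with $f(x+A)\subseteq f(x)+A'$ for \emph{all} $x\in G$, and symmetrically for a coarse inverse. Choosing bijections between the finite blocks $G_{n+1}/G_n$ and the corresponding blocks of $M_r$ stage by stage yields control only by entourages depending on $n$, and telescoping so that block cardinalities match in step does nothing to enforce this uniformity: the constraint is dynamical, not cardinal. Translation by a fixed $a\in G$ permutes the blocks of $\mathbb Q=\bigcup_n\frac1{n!}\mathbb Z$ along fixed-point-free infinite orbits, whereas the uniformly bounded moves available in $\mathbb Z\oplus\bigoplus_n\mathbb Z/2$ include involutions on the torsion coordinates; a blockwise bijection chosen freely at each stage will generically fail two-sided uniform control, and nothing in your outline supplies a mechanism forcing coherence of the choices across stages. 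Indeed your intermediate claim that all infinite countable locally finite Abelian groups are finitarily coarsely equivalent is itself a nontrivial theorem which you assume rather than prove; in \cite{b2} exactly this matching is achieved through the machinery of amenable shift-homogeneous metric spaces (and in the earlier work of Banakh, Higes and Zarichnyi through a coarse characterization of the model groups), which is where most of the labour of the published proofs lies. A secondary slip: ``superimposing the rank-$0$ case on the $\mathbb Z^r$-skeleton'' presupposes a splitting of $G$ into a free Abelian part plus a locally finite part, which fails for groups such as $\mathbb Q$ where rank and unbounded divisibility are entangled --- your own rank-one example shows the reduction to the locally finite case is not a superposition but the whole problem. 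So: necessity is complete in outline; sufficiency has a genuine gap.
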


In particular, any two countable torsion Abelian groups are  finitarily
coarsely equivalent.

For $\kappa$-asymorphisms, we have the following two results.

\begin{theorem}[\cite{PT}] For any uncountable cardinal $\kappa$, any two groups $G,H$ of cardinality $|G|=\kappa=|H|$ are $\kappa$-asymorphic.
\end{theorem}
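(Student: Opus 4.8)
The plan is to translate the statement into a purely combinatorial matching problem about coset partitions and then to solve it by a transfinite back-and-forth. First I would record that for uncountable $\kappa$ the family $[G]^{<\kappa}$ is a group bornology: if $|A|,|B|<\kappa$ then $|AB^{-1}|\le\max(|A|,|B|,\aleph_0)<\kappa$, so the right coarse structure $\mathcal E_{[G]^{<\kappa}}$ is defined. The key reformulation is that this coarse structure has a cofinal base of \emph{equivalence-relation} entourages $\varepsilon_K=\{(x,y):yx^{-1}\in K\}$, where $K$ ranges over the subgroups of $G$ with $|K|<\kappa$: indeed $\varepsilon_A\subseteq\varepsilon_{\langle A\rangle}$ and $|\langle A\rangle|<\kappa$ whenever $|A|<\kappa$. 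The $\varepsilon_K$-classes are exactly the right cosets $Kx$, each of cardinality $<\kappa$, and there are $[G:K]=\kappa$ of them. Thus $(G,\mathcal E_{[G]^{<\kappa}})$ is a \emph{cellular} (non-Archimedean) coarse space whose entire geometry is encoded by the family of coset partitions directed by refinement. A bijection $f\colon G\to H$ that carries each $\varepsilon_K$-class \emph{onto} an $\varepsilon_L$-class, and conversely for $f^{-1}$, for cofinally many $K$ and $L$, is automatically an asymorphism, since then $f(B(x,\varepsilon_K))=B(f(x),\varepsilon_L)$ holds with equality. So the theorem reduces to producing such an $f$.

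For the construction I would proceed by transfinite recursion of length $\kappa$. Enumerate $G=\{g_\xi:\xi<\kappa\}$ and $H=\{h_\xi:\xi<\kappa\}$ and build increasing continuous chains of subgroups $\{e\}=K_0\subseteq K_1\subseteq\cdots$ in $G$ and $\{e\}=L_0\subseteq L_1\subseteq\cdots$ in $H$, all of size $<\kappa$, together with a coherent nested family of bijections taking $K_\xi$-cosets onto $L_\xi$-cosets. Starting the chains at the trivial subgroup makes the finest partitions the partitions into singletons, so in the limit the coset-level bijections assemble into an honest pointwise bijection $f\colon G\to H$. At a successor step one enlarges $K_\xi$ to $K_{\xi+1}$ and $L_\xi$ to $L_{\xi+1}$ so as to (a) absorb $g_\xi$ into $K_{\xi+1}$ and $h_\xi$ into $L_{\xi+1}$ (guaranteeing that the unions exhaust $G$ and $H$, hence that $f$ is a bijection and the families of scales are cofinal), (b) keep the new groups of size $<\kappa$, and (c) arrange the successive indices to agree, $[K_{\xi+1}:K_\xi]=[L_{\xi+1}:L_\xi]$, so that each old coset splits into the same cardinal number of sub-cosets on both sides and the bijection can be refined coset by coset; at limit stages one takes unions, using continuity of the chains. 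The freedom to add generators lets one reach any prescribed index up to $|K_\xi|+\aleph_0$, which is what makes (a)--(c) simultaneously satisfiable.

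The main obstacle is the bookkeeping that keeps the chosen family cofinal in \emph{all} small subgroups while matching the branching of the two coset-trees and maintaining a bijection in both directions. For regular $\kappa$ a single continuous chain of length $\kappa$ is already cofinal in $[G]^{<\kappa}$, since any subgroup of size $<\kappa=\mathrm{cf}(\kappa)$ is absorbed at a bounded stage, and the argument above goes through essentially verbatim. For singular $\kappa$ this fails: a subgroup of size $\ge\mathrm{cf}(\kappa)$ need not be contained in any single member of a $\kappa$-chain, so no chain is cofinal in the small subgroups. Here one must replace the chain by a cofinal \emph{tree} of small subgroups, one level for each cardinal $<\kappa$, and run the back-and-forth over this directed system, matching the branching cardinals level by level; this is exactly where the structure theory of cellular coarse spaces does the real work, and where I would expect the proof in \cite{PT} to invest its effort. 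Once the matched cofinal families and the coherent coset bijections are in place, coarseness of $f$ and $f^{-1}$ is immediate from the reformulation in the first paragraph.
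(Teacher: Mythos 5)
Your first paragraph is sound, and it matches the standard reduction: for uncountable $\kappa$ the subgroup entourages $\varepsilon_K$, $K\le G$ with $|K|<\kappa$, form a base (this is where uncountability of $\kappa$ enters, via $|\langle A\rangle|\le|A|+\aleph_0$), the space is cellular, and a bijection matching coset partitions along cofinal families of subgroups is an asymorphism. (Note the survey itself gives no proof --- the theorem is quoted from \cite{PT} --- so this is a comparison with the cited argument, which likewise works through the cellular/coset-tree structure.) But the mechanics of your second paragraph are inverted. Your chains \emph{increase}, so the coset partitions \emph{coarsen} as $\xi$ grows; with $K_0=\{e\}$ the stage-$0$ ``coset bijection'' is already a full pointwise bijection $f\colon G\to H$, fixed before any later subgroup is chosen, and nothing remains to be ``assembled in the limit''. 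Symmetrically, a full bijection of $K_\xi$-cosets chosen at stage $\xi$ (there are $\kappa$ of them) will not in general be compatible with the aggregation into $K_{\xi+1}$-cosets chosen afterwards, so old cosets do not ``split into sub-cosets'' --- they merge. The repair is to match only \emph{transversals} at each successor step: equalize the indices (your exact-index claim also needs care: adding $\mu$ fresh coset representatives gives index exactly $\mu$ only when $\mu\ge|K_\xi|$, so one should take $\mu_\xi=|K'|+|L'|+\aleph_0$ where $K',L'$ are the minimal enlargements absorbing $g_\xi,h_\xi$), fix a bijection between the representative sets, and only at the end define $f$ by sending the resulting transfinite normal forms $t^{\beta_k}_{j_k}\cdots t^{\beta_1}_{j_1}$ (levels $\beta_k<\cdots<\beta_1$) to the corresponding words in the $H$-transversals; limit stages are then automatic by continuity. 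With these corrections your plan does prove the theorem for \emph{regular} $\kappa$.

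The singular case, however, is a genuine gap, not bookkeeping, and you concede it rather than close it. For singular $\kappa$ the ideal $[G]^{<\kappa}$ has no base of cardinality $\kappa$ at all (the cofinality of $([\kappa]^{<\kappa},\subseteq)$ is at least $\kappa^+$, by a standard diagonalization), so no recursion of length $\kappa$ can explicitly enumerate a cofinal family of subgroups, and a single bijection compatible with a directed non-chain family of more than $\kappa$ coset partitions is a much stronger object than anything your back-and-forth produces; ``replace the chain by a cofinal tree, matching branching level by level'' is precisely the missing construction. Moreover, the chain-built normal-form map genuinely fails to be coarse here: for $a$ of minimal level $\gamma(a)$ one only gets $f(ax)f(x)^{-1}\in L_{\gamma(a)}$, and for a set $A=\{a_i:i<\mathrm{cf}(\kappa)\}$ with levels $\gamma(a_i)$ cofinal in $\kappa$ the best bound the construction yields for the required $B$ is $\bigcup_{i}L_{\gamma(a_i)}$, which is all of $H$; nothing in your argument replaces this by a set of size $<\kappa$. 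Since the statement asserts the result for \emph{all} uncountable $\kappa$, deferring the singular case to ``where I would expect the proof in \cite{PT} to invest its effort'' leaves the proof incomplete.
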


\begin{theorem}[\cite{b22}]\label{2.3} Let $\kappa$ be a cardinal and $G$ be an  Abelian group of cardinality $|G|\ge\kappa$. The group $G$ is $\kappa$-asymorphic to a free Abelian group. If $\kappa<|G|$ or $\kappa=|G|$ is a singular cardinal, then $G$ is  not not $\kappa$-coarsely equivalent to a free group.  In particular, $G$ is not  $\kappa$-asymorphic to a free group.
\end{theorem}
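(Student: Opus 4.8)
The plan is to reformulate everything in the language of \emph{displacement sets}. For the bornology $[G]^{<\kappa}$ the entourages are the $\e_A$ with $|A|<\kappa$, and $(x,y)\in\e_A$ means $yx^{-1}\in A\cup\{e\}$; writing abelian groups additively, a bijection $f\colon G\to H$ is coarse exactly when for every $A\in[G]^{<\kappa}$ the set $\{f(x+a)-f(x):x\in G,\ a\in A\}$ belongs to $[H]^{<\kappa}$, so that $f$ is an asymorphism iff all displacement sets of $f$ and of $f^{-1}$ have cardinality $<\kappa$. In this language the theorem splits into two tasks: (1) build such a bijection from $G$ onto a free abelian group, and (2) show that no such bijection, even after passing to large subsets, can match $G$ with a free group under the stated cardinal hypotheses. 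For (2) I would invoke (1) and the transitivity of $\kappa$-coarse equivalence to reduce to a single model: it suffices to prove that the free abelian group of rank $\lambda:=|G|$ is not $\kappa$-coarsely equivalent to any free group.

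For (1) I may assume $\kappa>\w$, the range in which $[\,\cdot\,]^{<\kappa}$ differs essentially from the finitary bornology. The engine is a \emph{product of block bijections}: if an abelian group is a direct sum $\bigoplus_{j<\lambda}B_j$ of subgroups with $|B_j|<\kappa$, then bijections $\phi_j\colon B_j\to C_j$ (with $\phi_j(0)=0$) onto the blocks of a second such sum assemble, on finitely supported tuples, into a bijection $f$. Since adding a fixed $a$ changes only the finitely many blocks in the support of $a$, the difference $f(x+a)-f(x)$ lies in $\bigoplus_{j\in\operatorname{supp}a}C_j$ \emph{independently of} $x$; this is a set of size $<\kappa$, and a union over any $A\in[G]^{<\kappa}$ keeps the size $<\kappa$ because $\kappa>\w$ gives $|A|\cdot\w<\kappa$. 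By symmetry $f^{-1}$ has the same property, so $f$ is an asymorphism. After bundling $\w$ generators per block, $\bigoplus_\lambda\mathbb Z$ is of this form, so it remains only to realise $G$, up to asymorphism, as a direct sum of $\lambda$ bounded subgroups; for this I would filter $G$ by a continuous chain of $<\kappa$-generated subgroups with bounded quotients and verify that the ``carrying'' between successive quotients is absorbed into a bounded error.

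For (2) I would use the coarse invariant of \emph{one-endedness}: call a ballean one-ended if for every bounded set $B$ the complement cannot be partitioned into two unbounded pieces $U,V$ that are \emph{asymptotically disjoint}, meaning $B(U,\e)\cap B(V,\e)$ is bounded for every $\e$. One checks that one-endedness is inherited by large subballeans and preserved by asymorphisms, hence is a $\kappa$-coarse-equivalence invariant. By the reduction above it suffices to treat $G=\bigoplus_\lambda\mathbb Z$, which is one-ended at every scale $\kappa\le\lambda$ because its high rank lets any attempted cut be bridged by translations supported off the bounded support of $B$. Thus the theorem comes down to showing that a free group $F$ of cardinality $\lambda$ fails to be one-ended whenever $\kappa<\lambda$ or $\kappa=\lambda$ is singular.

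This last point is where I expect the real difficulty. I must produce in $F$ a bounded set $B$ whose complement splits into two unbounded asymptotically disjoint pieces, and the natural source of cuts is the tree that is the Cayley graph of $F$; I would position the cut using a cofinal decomposition $S=\bigcup_{i<\operatorname{cf}\lambda}S_i$ of a free basis into pieces of size $<\lambda$. The genuine obstacle is that the cut must be \emph{coarse}: a single admissible move $x\mapsto xa$ with $a$ in a bounded set may have arbitrarily large word length, so one element of a bounded entourage can leap across a naive geometric separation. One must therefore choose $B$ so that no family of fewer than $\kappa$ such long jumps can join the two sides, and exactly here the hypotheses enter — a counting argument along the cofinal decomposition bounds the crossings when $\kappa<\lambda$ or $\kappa=\lambda$ is singular, whereas for $\kappa=\lambda$ regular the abundant room always permits routing around $B$, which is why that case is correctly excluded. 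A second, more routine obstacle is the carrying estimate in the graded-direct-sum reduction of part (1).
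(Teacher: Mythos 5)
The survey you are checking states Theorem~\ref{2.3} with a citation to \cite{b22} and contains no proof, so your proposal can only be judged on its own merits; on those merits it has two genuine gaps, one in each half. In part (1), the block-bijection engine is fine as far as it goes, but the step you postpone --- realising $G$ ``up to asymorphism'' as a direct sum of $\lambda$ bounded blocks by filtering through a continuous chain and absorbing the carrying --- is not a routine obstacle: it \emph{is} the theorem. Literal decompositions do not exist in general (there are indecomposable Abelian groups of arbitrarily large cardinality, and already $\mathbb Z^{\omega}$ is not a direct sum of countable subgroups), and, more tellingly, nothing in your chain-with-transversals scheme uses commutativity: every group of cardinality $\lambda$ admits such a smooth chain of subgroups, and for $a\in G_{\alpha_0}$ right translation by $a$ likewise disturbs only the ``digits'' below level $\alpha_0$. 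If the carrying were absorbable as sketched, the same construction would make a free group of cardinality $\lambda>\kappa$ $\kappa$-asymorphic to a free Abelian group, contradicting part (2) of the very statement you are proving; so commutativity must enter exactly in the step you left blank. There is also a cardinal-arithmetic error: for singular $\kappa$ the union over $a\in A$ of the finite block sums $\bigoplus_{j\in\mathrm{supp}\,a}C_j$ is a union of fewer than $\kappa$ sets each of size $<\kappa$ and can have size exactly $\kappa$; your bound $|A|\cdot\omega<\kappa$ controls the support set, not the displacement set. This is repairable only if the blocks have size uniformly below some fixed $\mu<\kappa$, which makes the (already missing) decomposition step harder. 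Finally, ``I may assume $\kappa>\omega$'' is not a reduction: for $\kappa=\omega$ the assertion as printed fails outright (e.g.\ $\bigoplus_{\omega}\mathbb Z_2$ has asymptotic dimension $0$, while any free Abelian group of positive rank coarsely contains $\mathbb Z$), so the finitary case must be explicitly excluded, as it is in \cite{b22}, not waved past.

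In part (2), one-endedness is a plausible invariant (its preservation under coarse equivalence and passage to large subsets is standard but should be checked), yet both substantive claims are asserted rather than proved: that $\bigoplus_{\lambda}\mathbb Z$ is one-ended at scale $\kappa$, and that a free group of cardinality $\lambda$ admits a coarse cut precisely when $\kappa<\lambda$ or $\kappa=\lambda$ is singular. Your own discussion shows why the naive tree cut fails: a single entourage move $x\mapsto ax$ with $a$ ranging over a set of size $<\kappa$ bridges any geometric separation along a whole coset family of size up to $\lambda$, so the ``counting argument along the cofinal decomposition'' that you invoke is exactly the missing mathematics. The case split is also the crux, not a corollary: since (per the preceding theorem of the survey, which must be read for regular $\kappa$ to be consistent with Theorem~\ref{2.3}) any two groups of regular uncountable cardinality $\kappa$ are $\kappa$-asymorphic, your construction must necessarily fail at $\kappa=\lambda$ regular, and you supply no mechanism that distinguishes singular from regular there beyond the phrase ``abundant room.'' As it stands, the proposal is a sensible research plan whose two halves terminate precisely where the proof in \cite{b22} begins.
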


\begin{theorem}[\cite{b18}]\label{2.5}
For every countable group $G$  there are continuum many
 distinct classes of finitarily coarsely equivalent subsets of $G$.
 \end{theorem}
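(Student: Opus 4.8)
The plan is to prove the (trivial) upper bound and the (substantive) lower bound separately, the whole difficulty lying in the latter. Since $G$ is countable, the finitary coarse structure $\mathcal E_{[G]^{<\w}}$ has the countable base $\{\e_{A}:A\in[G]^{<\w}\}$ and its bounded sets are exactly the finite subsets of $G$; moreover every ball $B(x,\e_{A})=\{x\}\cup Ax$ has cardinality at most $|A|+1$, so the ballean $(G,\mathcal E_{[G]^{<\w}})$ is metrizable and of bounded geometry. As $G$ is countable it has only $2^{\aleph_{0}}=\mathfrak c$ subsets, whence at most $\mathfrak c$ classes of finitarily coarsely equivalent subsets; it remains to produce $\mathfrak c$ pairwise inequivalent ones.

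For the lower bound I would realise a continuum-sized family of subsets whose multiscale clustering patterns are pairwise distinct. For an entourage $\e=\e_{A}$ call the $\e$-components of a subset $Y$ the classes of the equivalence relation on $Y$ generated by $(x,y)\in\e$. As $A$ increases these components merge, and the resulting hierarchical partition --- the coarse dendrogram of $Y$ --- is respected by any coarse equivalence up to a monotone reparametrisation of the scale and a bounded error: this is the coarse invariant I would use. Concretely, for each $S$ in a continuum-sized index set I would build $A_{S}=\bigsqcup_{k}C_{k}$ as a disjoint union of finite blocks $C_{k}$, each internally connected at one fixed scale (so that $C_{k}$ is a single $\e_{A_{1}}$-component) of diameters $D_{k}\to\infty$, the blocks placed pairwise so far apart that for every fixed entourage only finitely many of them ever merge. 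The interleaving of the block-diameters $D_{k}$ with the separations $\Delta_{k}$ between consecutive blocks is chosen, with super-fast growth, to encode $S$ in the shape of the dendrogram. Such configurations exist because $G$ is infinite (so diameters are unbounded) and of bounded geometry (so the blocks can be pushed arbitrarily far apart).

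The decisive step is the rigidity: a finitary coarse equivalence $f\colon A_{S}\to A_{S'}$, being a coarse embedding with coarse inverse, is controlled by a single pair of proper functions $\rho_{\pm}$ valid at all scales simultaneously; I would show that this forces $f$ to carry all but finitely many blocks of $A_{S}$ each into a single block of $A_{S'}$ and to match the merging events of the two dendrograms order-isomorphically. Although $\rho_{\pm}$ may distort absolute diameters arbitrarily, the fact that one fixed $\rho_{\pm}$ must fit every scale at once pins down the combinatorial shape of the pattern, so $A_{S}$ and $A_{S'}$ are finitarily coarsely equivalent only when their shapes coincide. Designing the encoding $S\mapsto A_{S}$ so that distinct $S$ yield distinct shapes then gives $\mathfrak c$ classes.

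I expect two points to carry the real weight. First, the uniform realisability of the blocks in an arbitrary countable group: in a locally finite $G$ (for example $\bigoplus_{n}\mathbb Z/2$) the $\e_{A_{1}}$-components of $G$ itself all have the same bounded size, so the blocks of prescribed growing diameter must instead be extracted as suitably structured subsets of an increasing chain of finite subgroups, and one must check that their induced finitary structure still exhibits the intended multiscale clustering. Second, and hardest, is the rigidity lemma above --- proving that a single admissible control function cannot reconcile two different clustering shapes --- since this is precisely what prevents the extreme flexibility of coarse equivalence (under which, for instance, all infinite uniformly discrete subsets collapse into a single class) from collapsing the whole family.
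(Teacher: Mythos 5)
You should first know that the survey itself contains no proof of this theorem: it is quoted from \cite{b18}, so your plan can only be compared with the method of that cited paper. Your upper bound is fine, and your general blueprint --- finite blocks placed so sparsely that for every fixed entourage all but finitely many blocks are isolated, indexed so as to encode a subset of $\mathbb N$ --- is indeed the right kind of family and matches the cited construction. The genuine gap is the ``decisive step'' you name but do not prove, and as you have set it up it breaks precisely in the hardest case you yourself flag. Your distinguishing invariant is metric: the shape of the dendrogram up to monotone rescaling, protected by the linear diameter control $\rho_{+}(1)\cdot(\mbox{chain length})$ available for blocks chain-connected at one fixed scale. But in a locally finite group such as $\bigoplus_{n}\mathbb Z_2$ every set connected at a fixed scale $\varepsilon_F$ has uniformly bounded diameter (the $\varepsilon_F$-components of $G$ are cosets of the finite subgroup $\langle F\rangle$), so your blocks can only be connected at scales $r_k\to\infty$; the same happens in any countable group that is not finitely generated. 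Then the only bound on the image of the $k$-th block involves $\rho_{+}(r_k)$, and since the coarse equivalence $f$ --- hence $\rho_{\pm}$ --- is chosen \emph{after} the family $\{A_S\}$ is fixed, no choice of separations made at construction time can dominate $\rho_{+}(r_k)$ eventually ($\rho_+$ may well exceed any prescribed tower along the sequence $r_k$). This order of quantifiers defeats both halves of your rigidity lemma: the blocks-to-blocks localization and the claim that merging events are matched order-isomorphically. A ``monotone reparametrisation of scale'' is simply not something an arbitrary coarse equivalence respects once the connectivity scales of the blocks are unbounded.

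The repair --- and the invariant on which the arguments in this circle of results run, compare the divisibility classification of locally finite groups from \cite{b11} quoted in the survey --- is cardinality, not diameter. In the finitary coarse structure every ball satisfies $|B(x,\varepsilon_F)|\le|F|+1$ uniformly in $x$; consequently any finitary coarse equivalence between subsets has fibers of uniformly bounded size and coarsely dense image, hence preserves the cardinalities of corresponding cells up to a fixed multiplicative constant. Taking blocks whose sizes grow super-geometrically (so that distinct block sizes remain distinguishable modulo any bounded ratio) and comparing, for each entourage, the sizes of components, one separates $A_S$ from $A_{S'}$ for suitably chosen families of sets $S$ (e.g.\ an almost disjoint family), with no dendrogram analysis and no control-function bookkeeping at growing scales. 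It is telling that you invoke bounded geometry only as a reason blocks ``can be pushed arbitrarily far apart'' (that is just infiniteness plus induction): uniform finiteness of balls is in fact the one asset strong enough to survive the quantifier problem above, and your sketch never uses it. So the proposal identifies the correct family but rests on an unproved --- and, in the locally finite case, unprovable as stated --- rigidity lemma; replacing the shape invariant by the size invariant is not a cosmetic adjustment but the missing idea.
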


\subsection{Free coarse groups}
A class $\mathfrak{M}$ of groups is a {\it variety}  if $\mathfrak{M}$ is
closed under  subgroups, products and
 homomorphic images.
We assume that $\mathfrak{M}$ is non-trivial (i.e.
 there exists  $G\in \mathfrak{M}$  such that  $|G|>1$)  and recall that
 the a group  $F_{\mathfrak{M}} (X)$ in the variety $\mathfrak M$ is defined by the following
 conditions: $F_{\mathfrak{M}} (X)\in\mathfrak{M}$, $X\subset F_{\mathfrak{M}} (X)$,
 $X$   generates  $F_{\mathfrak{M}} (X)$
 and every mapping $X \to G$,  $G\in \mathfrak{M}$ can be extended to  homomorphism
 $F_{\mathfrak{M}} (X)\to G$.

For a coarse space $(X, \mathcal{E})$, a {\it free coarse group} $F_{\mathfrak{M}} (X, \mathcal{E})$ is defined as a coarse group
$(F_{\mathfrak{M}} (X), \mathcal{E}^{\prime})$ such that $(X, \mathcal{E})$
is a subballean of
   $(F_{\mathfrak{M}} (X), \mathcal{E}^{\prime})$
   and every coarse mapping
   $(X, \mathcal{E})\to (G, \mathcal{E}^{\prime\prime})$,
   $G\in \mathfrak{M}$, $(G, \mathcal{E^{\prime\prime}})$
   is a coarse group, can be extended to  coarse homomorphism
   $(F_{\mathfrak{M}} (X), \mathcal{E}^{\prime})\to (G, \mathcal{E^{\prime\prime}})$. The definition implies that a free coarse group is unique up to an asymorphism, which is identity on $X$.

The following  theorem is proved in  \cite{b19} with explicit description of the coarse structure of $F_{\mathfrak{M}} (X, \mathcal{E})$.

\begin{theorem}\label{2.6}
For every coarse space  $(X, \mathcal{E})$  and every non-trivial variety $\mathfrak{M}$ of groups,
there exists  a free coarse group  $F_{\mathfrak{M}} (X, \mathcal{E})$.
\end{theorem}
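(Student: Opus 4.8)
The plan is to construct $F_{\mathfrak{M}}(X,\mathcal{E})$ concretely as the abstract free group $F_{\mathfrak{M}}(X)$ in the variety, equipped with a coarse structure built from the given coarse structure $\mathcal{E}$ on $X$, and then to verify the two defining properties: that it is a coarse group containing $(X,\mathcal{E})$ as a subballean, and that it enjoys the required universal (extension) property. The underlying group $F_{\mathfrak{M}}(X)$ exists by the standard theory of varieties of groups recalled just before the statement, so the entire problem is to manufacture the correct coarse structure $\mathcal{E}'$ and prove its properties.

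\textbf{Construction of the coarse structure.} By Proposition~\ref{2.2}, a coarse structure on a group makes it a coarse group exactly when it is of the form $\mathcal{E}_{\mathcal{I}}$ for an invariant group bornology $\mathcal{I}$. So the task reduces to choosing the \emph{right} invariant group bornology $\mathcal{I}$ on $F_{\mathfrak{M}}(X)$. First I would identify the bounded sets of $(X,\mathcal{E})$, which form a bornology $\mathcal{B}_{\mathcal{E}}$ on $X$; the requirement that $(X,\mathcal{E})$ embed as a subballean forces the balls of $X$ to be recovered from $\mathcal{E}'$, so the chosen $\mathcal{I}$ must restrict appropriately to $X$. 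The natural candidate is the smallest invariant group bornology $\mathcal{I}$ on $F_{\mathfrak{M}}(X)$ whose trace on $X$ induces (via $\mathcal{E}_{\mathcal{I}}{\restriction}X$) exactly the coarse structure $\mathcal{E}$. Concretely, I expect $\mathcal{I}$ to be generated — under finite products, inverses, subsets, and conjugation-closure $\bigcup_{g}g^{-1}Ag$ — by the family of bounded subsets of $X$ together with all finite subsets of $F_{\mathfrak{M}}(X)$. Setting $\mathcal{E}' := \mathcal{E}_{\mathcal{I}}$, Proposition~\ref{2.2} immediately gives that $(F_{\mathfrak{M}}(X),\mathcal{E}')$ is a coarse group.

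\textbf{Subballean and universal property.} Next I would check that the inclusion $X\hookrightarrow F_{\mathfrak{M}}(X)$ realizes $(X,\mathcal{E})$ as a subballean of $(F_{\mathfrak{M}}(X),\mathcal{E}')$; this amounts to verifying that $\mathcal{E}'{\restriction}X=\mathcal{E}$, one inclusion of which holds by construction (we built $\mathcal{I}$ to induce $\mathcal{E}$) and the other by minimality of $\mathcal{I}$. For the universal property, take any coarse group $(G,\mathcal{E}'')$ in $\mathfrak{M}$ and any coarse map $f\colon (X,\mathcal{E})\to(G,\mathcal{E}'')$. Since $F_{\mathfrak{M}}(X)$ is the abstractly free group, $f$ extends uniquely to a group homomorphism $\bar{f}\colon F_{\mathfrak{M}}(X)\to G$. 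The content is to show $\bar{f}$ is a coarse map for $\mathcal{E}'$. Because $(G,\mathcal{E}'')$ is a coarse group, by Proposition~\ref{2.2} it is $\mathcal{E}_{\mathcal{J}}$ for an invariant group bornology $\mathcal{J}$ on $G$; I would then show that $\bar{f}$ sends members of $\mathcal{I}$ into $\mathcal{J}$. This follows because $\bar{f}$ is a homomorphism (so it respects products, inverses, and conjugation), $f$ being coarse means $\bar{f}$ carries bounded subsets of $X$ into bounded subsets of $G$ (i.e.\ into $\mathcal{J}$), and $\mathcal{I}$ is generated from the $X$-bounded sets and finite sets precisely by those homomorphism-respected operations; hence the generators of $\mathcal{I}$ land in the invariant group bornology $\mathcal{J}$, and therefore so does all of $\mathcal{I}$.

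\textbf{Main obstacle.} The delicate step is verifying $\mathcal{E}'{\restriction}X=\mathcal{E}$, i.e.\ that forming the invariant group bornology on the whole of $F_{\mathfrak{M}}(X)$ does not enlarge the trace on $X$ beyond the original bounded sets. A priori, closing under conjugation $A\mapsto\bigcup_{g}g^{-1}Ag$ and under products could drag large new subsets back into $X$, collapsing the coarse structure. Controlling this requires a careful normal-form analysis of elements of $F_{\mathfrak{M}}(X)$: one must show that a product of conjugates of bounded-in-$X$ sets, when intersected with $X$, remains bounded in $(X,\mathcal{E})$. This is exactly where the explicit description of $\mathcal{E}'$ promised in \cite{b19} does the real work, and I would expect the proof there to hinge on a word-length or syllable-counting argument showing that bounded balls in $F_{\mathfrak{M}}(X)$ meet $X$ in $\mathcal{E}$-bounded sets.
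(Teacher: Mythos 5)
Your overall strategy has the right shape and is in the spirit of the actual construction in \cite{b19}: realize the free coarse group as the relatively free group $F_{\mathfrak M}(X)$ with a group coarse structure, which by Proposition~\ref{2.2} means choosing an invariant group bornology $\mathcal I$, extend maps by abstract freeness, and verify coarseness of the extension on generators. But your concrete choice of generators for $\mathcal I$ is wrong, and it breaks the theorem already at the inclusion you call easy. A coarse structure is \emph{not} determined by its bornology of bounded sets --- as the paper notes in Section~2.1, only \emph{thin} coarse structures are of the form $\mathcal E_{\mathcal B}$ --- so generating $\mathcal I$ from the bounded subsets of $X$ together with finite sets discards the entourage data of $\mathcal E$. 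Concretely, take $X=\mathbb Z$ with its metric coarse structure and $\mathfrak M$ the variety of Abelian groups. The bounded subsets of $X$ are exactly the finite sets; in $F_{\mathfrak M}(X)=\bigoplus_{x\in X}\mathbb Z$ conjugation is trivial and $AB^{-1}$ is finite for finite $A,B$, so your $\mathcal I$ is just $[F_{\mathfrak M}(X)]^{<\omega}$. For finite $A$ the entourage $\varepsilon_A$ meets $X\times X$ in the diagonal plus finitely many pairs, so $\mathcal E_{\mathcal I}{\restriction}X$ is the discrete (thin) structure and does not contain the metric entourage $\{(x,y):|x-y|\le 1\}$. Hence $(X,\mathcal E)$ is not a subballean of $(F_{\mathfrak M}(X),\mathcal E_{\mathcal I})$, contrary to your claim that one inclusion ``holds by construction.'' The generators must encode entourages, not merely bounded sets: the natural choice is $\Delta_\varepsilon=\{yx^{-1}:(x,y)\in\varepsilon\}$ for $\varepsilon\in\mathcal E$. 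With these, $\varepsilon\subseteq\varepsilon_{\Delta_\varepsilon}\cap(X\times X)$ gives the easy inclusion, and your universal-property computation then does go through: if $f$ is coarse into $(G,\mathcal E_{\mathcal J})$, then $\bar f(\Delta_\varepsilon)\subseteq\{e\}\cup A$ for some $A\in\mathcal J$, and the homomorphism $\bar f$ together with invariance of $\mathcal J$ carries the generated bornology into $\mathcal J$.

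Two further problems remain. First, your official definition of $\mathcal I$ as ``the smallest invariant group bornology whose trace induces exactly $\mathcal E$'' is circular and a priori ill-posed: nothing guarantees such bornologies exist or are closed under intersection, so it cannot serve as a definition. Second, even with the corrected generators $\Delta_\varepsilon$, the hard direction $\mathcal E_{\mathcal I}{\restriction}X\subseteq\mathcal E$ --- that closing under products, inverses and conjugation does not drag new entourages back onto $X$ --- is precisely the substance of Theorem~\ref{2.6}; it requires a normal-form analysis in the relatively free group (this is also where non-triviality of $\mathfrak M$ enters, since in the trivial variety $X$ cannot even embed), and you explicitly defer it to \cite{b19}. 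That deferral is, in fairness, all the surveyed paper does as well --- it offers no proof, only the citation with the remark that \cite{b19} describes $\mathcal E'$ explicitly --- but it means your proposal is not a proof: the construction as stated is incorrect, and the repaired construction's key step is assumed rather than established.
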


\subsection{Maximality}
 A topological space $X$ with no isolated points is called {\it maximal} if $X$ has an isolated point in any stronger topology. A topological groups $G$ is called {\it maximal} if $G$ is maximal as a topological space.  Every maximal  topological group has an open countable Boolean  subgroup,  and can be constructed using the Martin Axiom.
On the other hand, the existence of a maximal topological group implies the existence of a
$P$-point in  $\omega^{\ast}$, see \cite{10}.

An unbounded coarse space $(X, \mathcal{E})$ is called {\it maximal } if $X$  is bounded in every coarse structure $\mathcal{E}^{\prime}$  such that $\mathcal{E}\subset \mathcal{E}^{\prime}$.
A coarse group $G$  is called {\it maximal } if $G$ is maximal as a coarse space.
If a coarse group $(G, \mathcal{I})$ is maximal then $\{ g ^{2}: g\in G\}$  is bounded in
$(G, \mathcal{I})$,
 and under $CH$ a maximal coarse Boolean group is constructed in \cite{b23}, (see also \cite[Chapter 10]{b24}), but the following question remains open.

\begin{question}\label{2.1} Does there exists a maximal coarse group in ZFC?
\end{question}

\subsection{Normality}
We say  that subsets $Y, Z$ of a coarse space $(X, \mathcal{E})$ are {\it asymptotically disjoint} if, for every $\varepsilon\in \mathcal{E}$ the intersection
 $B(Y,\varepsilon)\cap B(Z,\varepsilon )$ is bounded.

A subset $U\subset X$ of a coarse space $(X,\mathcal E)$ is called a {\em
 asymptotic neighborhood} of a set $A\subset X$ if the sets $A$ and $X\setminus U$ are asymptotically disjoint.

A coarse space $(X, \mathcal{E})  $  is called {\it normal}  if any  asymptotically disjoint subsets $Y,Z\subset X$ have disjoint asymptotic neighborhoods $O_Y$, $O_Z$.

\begin{theorem}[\cite{P03}] For a coarse space $(X,\mathcal E)$ the following conditions are equivalent:
\begin{enumerate}
\item $(X,\mathcal E)$ is normal;
\item for any disjoint and asymptotically disjoint sets $Y,Z\subset X$ there exists  a slowly oscillating function
$f: (X, \mathcal{E})\to   [0,1]$   such that $f(Y)\subset\{0\}$
 and    $f(Z)\subset\{1\}.$
\item for each subballean $Y$ of $X$,  every bounded slowly oscillating function  $f: Y\to\mathbb{R}$  can be extended to a bounded slowly oscillating function on $X$.
\end{enumerate}
\end{theorem}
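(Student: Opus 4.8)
The natural plan is to establish the cycle of implications $(1)\Rightarrow(2)\Rightarrow(3)\Rightarrow(1)$. Recall that a function $f\colon(X,\mathcal E)\to\mathbb R$ is \emph{slowly oscillating} if for every $\varepsilon\in\mathcal E$ and every $\delta>0$ there is a bounded set $B\subseteq X$ with $\operatorname{diam} f(B(x,\varepsilon))<\delta$ for all $x\in X\setminus B$. With this reading, $(1)\Rightarrow(2)$ is a coarse version of Urysohn's lemma, $(2)\Rightarrow(3)$ a coarse version of the Tietze extension theorem, and $(3)\Rightarrow(1)$ a short closing step. Throughout I write $A\prec U$ for ``$U$ is an asymptotic neighbourhood of $A$'', that is, $A$ and $X\setminus U$ are asymptotically disjoint; note that $A\prec U$ is preserved when $A$ is shrunk or $U$ is enlarged, and that altering either set by a bounded set changes nothing.

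For $(1)\Rightarrow(2)$ the first step is an interpolation lemma: if $A\subseteq U$ and $A\prec U$, then there is $V$ with $A\subseteq V\subseteq U$ and $A\prec V\prec U$. Indeed, applying normality to the asymptotically disjoint pair $A,\,X\setminus U$ gives disjoint asymptotic neighbourhoods $O_A\succ A$ and $O'\succ X\setminus U$, and then $V:=(O_A\cap U)\cup A$ works, the relations $A\prec V$ and $V\prec U$ following from $O_A\subseteq X\setminus O'$ and the monotonicity remarks. Starting from $Y\prec X\setminus Z$ (and $Y\subseteq X\setminus Z$, as $Y,Z$ are disjoint) and iterating over the dyadic rationals $D\cap(0,1)$, I build sets $U_r$ with $Y\subseteq U_r\subseteq X\setminus Z$ and $U_r\prec U_s$ whenever $r<s$. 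Put $f(x)=\inf\{r\in D:x\in U_r\}$ (with $\inf\varnothing=1$); the containments give $f(Y)\subseteq\{0\}$ and $f(Z)\subseteq\{1\}$. The crux is that $f$ is slowly oscillating. Given a symmetric $\varepsilon$ and $\delta$, choose consecutive dyadics $r_0<\dots<r_n$ of mesh $<\delta/2$; each relation $U_{r_i}\prec U_{r_{i+1}}$ makes $B_i:=B(U_{r_i},\varepsilon)\cap B(X\setminus U_{r_{i+1}},\varepsilon)$ bounded, and $\operatorname{diam} f(B(x,\varepsilon))<2\cdot(\text{mesh})$ off the bounded set $B:=\bigcup_i B_i$, since otherwise some ball $B(x,\varepsilon)$ meets both $U_{r_i}$ and $X\setminus U_{r_{i+1}}$, forcing $x\in B_i$ by symmetry of $\varepsilon$.

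For $(2)\Rightarrow(3)$ I run the Tietze argument. Rescaling, let $f\colon Y\to[-1,1]$ be bounded and slowly oscillating on the subballean $Y$. The level sets $A=\{y:f(y)\le-\tfrac13\}$ and $B=\{y:f(y)\ge\tfrac13\}$ are disjoint, and they are asymptotically disjoint \emph{in $X$}: any point of $B(A,\varepsilon)\cap B(B,\varepsilon)$ yields $a\in A$, $b\in B$ with $(a,b)\in\varepsilon\circ\varepsilon^{-1}\in\mathcal E$, so slow oscillation of $f$ on $Y$ (with $\delta=\tfrac23$) puts $a$ or $b$ into a fixed bounded subset of $Y$, confining the point to a bounded ball. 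Applying $(2)$ to $A,B$ in $X$ gives a slowly oscillating $g_1\colon X\to[-\tfrac13,\tfrac13]$ separating them, with $\|f-g_1\|_{\infty,Y}\le\tfrac23$. Iterating produces slowly oscillating $g_n$ of sup-norm $\tfrac13(\tfrac23)^{n-1}$ with $\|f-\sum_{k\le n}g_k\|_{\infty,Y}\le(\tfrac23)^n$. The series $\tilde f=\sum_n g_n$ converges uniformly on all of $X$, restricts to $f$ on $Y$, is bounded, and is slowly oscillating because a uniform limit of slowly oscillating functions is slowly oscillating.

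Finally, for $(3)\Rightarrow(1)$: given asymptotically disjoint $Y,Z$, their intersection is bounded, so replacing $Y$ by $Y\setminus Z$ we may assume $Y\cap Z=\varnothing$. On the subballean $W=Y\cup Z$ define $h$ to be $0$ on $Y$ and $1$ on $Z$; as $Y,Z$ are asymptotically disjoint, $B_W(x,\varepsilon)$ meets both pieces only for $x$ in the bounded set $B(Y,\varepsilon)\cap B(Z,\varepsilon)$, so $h$ is slowly oscillating on $W$. Extend $h$ by $(3)$ to a bounded slowly oscillating $\tilde h$ on $X$, compose with the clamp $t\mapsto\max(0,\min(1,t))$, and set $O_Y=\tilde h^{-1}([0,\tfrac12))$ and $O_Z=\tilde h^{-1}((\tfrac12,1])$; these are disjoint, and each is an asymptotic neighbourhood of the corresponding set because slow oscillation makes level sets with value-ranges at positive distance asymptotically disjoint. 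This closes the cycle. The main obstacle is the slow-oscillation verification in the Urysohn step $(1)\Rightarrow(2)$, together with the need, in the Tietze step, to upgrade asymptotic disjointness of the level sets from $Y$ to the ambient space $X$.
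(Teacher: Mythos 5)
Your proof is correct and complete: the coarse Urysohn-type interpolation via the relation $A\prec U$ and dyadic chains $U_r$, the Tietze iteration with the key upgrade of asymptotic disjointness of the level sets from the subballean $Y$ to the ambient space $X$, and the closing step $(3)\Rightarrow(1)$ all check out (including the routine reductions by bounded sets and the use of symmetric entourages). The survey itself gives no proof, citing \cite{P03}, and your argument follows essentially the same route as that original source, so there is nothing substantive to flag.
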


We recall that a real-valued function $f:X\to\mathbb R$ defined on a coarse space $(X,\mathcal E)$ is {\em slowly oscillating} if for any $\e\in\mathcal E$ and real number $\delta>0$ there exists a bounded set $B\subset X$ such that $\mathrm{diam} f(B(x,\e))<\delta$.

Every metrizable coarse space is normal. More generally, a coarse space is normal if its coarse structure has a linearly ordered base, see \cite{P03}. A partial conversion of this result for products was recently proved in \cite{BP}.

\begin{theorem} If the product $X\times Y$ of two unbounded coarse spaces $X,Y$ is normal, then the coarse space $X\times Y$ has bounded growth and its bornology has a linearly ordered base.
\end{theorem}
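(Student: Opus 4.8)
The plan is to argue by contraposition through the slowly oscillating characterization of normality from \cite{P03}: I will show that if $X\times Y$ fails to have bounded growth, or if its bornology $\mathcal B:=\mathcal B_{X\times Y}$ fails to admit a linearly ordered base, then there are disjoint and asymptotically disjoint sets $S,T\subseteq X\times Y$ that no slowly oscillating function $f\colon X\times Y\to[0,1]$ can separate, so that $X\times Y$ is not normal. The two structural facts I would lean on are that $\mathcal B$ is generated by the rectangles $A\times C$ with $A\in\mathcal B_X$ and $C\in\mathcal B_Y$, and that every entourage of the product is contained in a rectangle $\varepsilon_X\times\varepsilon_Y$, so that each ball $B\big((x,y),\varepsilon\big)$ sits inside $B(x,\varepsilon_X)\times B(y,\varepsilon_Y)$. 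Combined with the standing hypothesis that both factors are unbounded, this gives two independent escape directions along the coordinate axes, which is the source of all the constructions below; it is also where the assumption that \emph{both} factors are unbounded is indispensable, since if one factor were bounded the product would be asymorphic to the other, for which the conclusion may fail.

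I would first treat the linearly ordered base for $\mathcal B$. Note that a chain base for $\mathcal B$ would project to cofinal chains in $\mathcal B_X$ and in $\mathcal B_Y$, so the obstruction to a chain base lives in the interaction of the two factor bornologies. Assuming $\mathcal B$ has no linearly ordered base, I would extract from the directed poset $(\mathcal B,\subseteq)$ a witness to non-linearizability: either an uncountable cofinality realized by no single chain, or, more concretely, a two-parameter cofinal skeleton that cannot be threaded by one chain. Using unboundedness of both factors I would realize this witness geometrically, choosing points that escape every bounded set and assembling $S$ spread along the $X$-direction and $T$ spread along the $Y$-direction. Such $S,T$ are disjoint, and they are asymptotically disjoint because fattening each by a fixed rectangle $\varepsilon_X\times\varepsilon_Y$ keeps them apart except in the corner region where both coordinates remain bounded, so $B(S,\varepsilon)\cap B(T,\varepsilon)$ is bounded. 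The heart of this step is to arrange the escape so that any separating $f$ is forced toward $0$ along $S$ and toward $1$ along $T$ while $S$ and $T$ interleave cofinally in $\mathcal B$; the absence of a cofinal chain is then exactly what prevents a coherent choice of bounded ``decision sets'' for $f$, contradicting slow oscillation.

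In parallel I would treat bounded growth, again contrapositively. If $X\times Y$ does not have bounded growth in the sense of \cite{BP}, then there is a single entourage $\varepsilon_0$ whose balls cannot be dominated, off any bounded set, by a point-function into the bornology. Since $\varepsilon_0$ lies in a rectangle $\varepsilon_X\times\varepsilon_Y$, this uncontrolled growth must occur along one coordinate, say $X$, on a family of centers that is itself unbounded in the transverse $Y$-coordinate. Pairing these ``fat'' balls against a set escaping in the $Y$-direction, I would build disjoint, asymptotically disjoint $S,T$ whose fat balls straddle both sets arbitrarily far out; a separating slowly oscillating $f$ would then have oscillation close to $1$ on arbitrarily remote balls, which is impossible. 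Hence $X\times Y$ is again non-normal.

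The step I expect to be the main obstacle is the combinatorial passage in the second paragraph: turning the abstract failure of a cofinal chain in $(\mathcal B,\subseteq)$ into an explicit, checkable pair $S,T$ that is simultaneously asymptotically disjoint for \emph{every} entourage and inseparable by \emph{every} slowly oscillating function. This demands the correct normal form for ``no linearly ordered base'' (distinguishing a genuine two-parameter cofinal obstruction from a mere antichain, which need not defeat normality) and a careful synchronization of the two coordinate escapes so that asymptotic disjointness and inseparability hold together. Verifying these two requirements at once, and pinning down precisely how the product structure forces the failure of slow oscillation, is where the real work of the proof lies.
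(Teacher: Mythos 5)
Your proposal is a plan rather than a proof, and the two steps on which the theorem actually rests are exactly the ones left undone. In the chain-base half, the phrases ``extract a two-parameter cofinal skeleton that cannot be threaded by one chain'' and ``the absence of a cofinal chain is then exactly what prevents a coherent choice of bounded decision sets'' are restatements of what must be proved, not arguments. Nothing in the sketch shows that the failure of a linearly ordered base can be witnessed by a \emph{single} pair $S,T$ defeating \emph{every} slowly oscillating function: a priori, non-linearizability of $(\mathcal B,\subseteq)$ only defeats candidate chains one at a time, and compressing this into one inseparable asymptotically disjoint pair is the entire difficulty, which you yourself flag as ``the main obstacle'' without resolving it. Note also that asymptotic disjointness of a set spread along the $X$-direction from a set spread along the $Y$-direction is automatic from the rectangle structure of product entourages (the fattenings meet inside a bounded rectangle), so that part of your construction carries no bornological information; all the content would have to sit in the inseparability argument you do not supply. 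In the bounded-growth half there is a concrete quantifier error: the negation of bounded growth reads ``for every admissible $\Phi$ there exists an entourage $\varepsilon$ that defeats it,'' not ``there exists a single entourage $\varepsilon_0$ defeating every $\Phi$,'' yet your construction fixes such an $\varepsilon_0$ at the outset, so the argument does not even start from the correct hypothesis.

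For comparison: the surveyed paper gives no proof (the theorem is quoted from \cite{BP}), and the proof there runs in the opposite, direct direction, which is what makes it tractable. From normality one separates canonical asymptotically disjoint sets manufactured from the two coordinate axes of $X\times Y$ (both factors being unbounded) by a slowly oscillating function $f\colon X\times Y\to[0,1]$, and then uses $f$ as a real-valued gauge: suitable level and sublevel sets of $f$ yield a family of bounded sets indexed by real parameters, hence linearly ordered, which is then verified to be a base of the bornology, and the multifunction $\Phi$ witnessing bounded growth is assembled from balls controlled by $f$. The linear order is inherited from the linear order of $[0,1]$, so the direct route never needs the combinatorial ``normal form for no linearly ordered base'' that your contrapositive route requires and that you correctly identify, but do not overcome, as the sticking point.
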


A coarse space $(X,\mathcal E)$ is defined to have {\em bounded growth} if there exists a (multi-valued) function $\Phi:X\to\mathcal P_X$ such that for every bounded set $B\subset X$ the union $\bigcup_{x\in B}\Phi(x)$ is bounded and for every entourage $\e\in\mathcal E$ there exists a bounded set $D\subset X$ such that $B(x,\e)\subset\Phi(x)$ for all $x\in X\setminus D$.


\begin{theorem}[\cite{BP}] Let $\kappa$ be an infinite cardinal and $G$ be a group of cardinality $|G|\ge\kappa$, endowed with the coarse structure $\mathcal E_{[G]^{<\kappa}}$, generated by the group ideal $[G]^{<\kappa}$. If the coarse space $(X,\mathcal E_{[G]^{\kappa}})$ is normal (and $G$ is solvable), then $|G|=\kappa$ (and the cardinal $\kappa=|G|$ is regular).
\end{theorem}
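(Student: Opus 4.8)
The plan is to strip the statement down to a purely combinatorial fact about the bornology and then feed that fact into the product–normality theorem quoted just above. Note first that the bounded subsets of $(G,\mathcal E_{[G]^{<\kappa}})$ are exactly the members of $[G]^{<\kappa}$, and that $\varepsilon_A\subseteq\varepsilon_{A'}$ holds iff $A\subseteq\{e\}\cup A'$; thus a linearly ordered base of the coarse structure is the same thing as a linearly ordered base of the bornology $[G]^{<\kappa}$. Everything then rests on the following lemma, which I would state and prove first.

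\emph{Lemma.} For infinite cardinals $\kappa\le\lambda$ the poset $([\lambda]^{<\kappa},\subseteq)$ has a cofinal chain (equivalently, $[\lambda]^{<\kappa}$ has a linearly ordered base) if and only if $\lambda=\kappa$ and $\kappa$ is regular.

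For the Lemma the direction $\lambda=\kappa$ regular is immediate: the sets $B_\alpha=\{\beta:\beta<\alpha\}$, $\alpha<\kappa$, form a chain of sets of size $<\kappa$, and regularity gives $\sup S<\kappa$ for $S\in[\kappa]^{<\kappa}$, so $S\subseteq B_{\sup S+1}$. For the converse, suppose a cofinal chain exists and extract a strictly increasing cofinal subchain $\langle C_i:i<\mu\rangle$ with $\mu=\operatorname{cf}$ of the chain regular. Three short steps finish it. \textbf{(i) Diagonalisation forces $\mu\ge\kappa$:} since $|C_i|<\kappa\le\lambda$ we may pick $a_i\in\lambda\setminus C_i$; if $\mu<\kappa$ then $A=\{a_i:i<\mu\}\in[\lambda]^{<\kappa}$, hence $A\subseteq C_j$ for some $j$, contradicting $a_j\notin C_j$. \textbf{(ii) Limit stages force $\mu\le\kappa$:} choosing $x_i\in C_{i+1}\setminus C_i$, every limit $\delta<\mu$ has $\{x_i:i<\delta\}\subseteq C_\delta$, so $|\delta|\le|C_\delta|<\kappa$; were $\mu>\kappa$ the index $\delta=\kappa$ would give $|C_\kappa|\ge\kappa$, a contradiction. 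Hence $\mu=\kappa$ is regular. \textbf{(iii) Counting forces $\lambda=\kappa$:} cofinality gives $\bigcup_{i<\kappa}C_i=\lambda$, whence $\lambda\le\sum_{i<\kappa}|C_i|\le\kappa\cdot\kappa=\kappa$.

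Now the two conclusions. For the cardinality bound $|G|=\kappa$ (for every group) I would argue through bounded growth: if $\lambda=|G|>\kappa$ then $(G,\mathcal E_{[G]^{<\kappa}})$ has \emph{no} bounded growth. Indeed, given a witness $\Phi$, each $\Phi(g)$ is bounded, hence of size $<\kappa$; picking $\kappa$ distinct $a_i$ and the exceptional sets $D_i\in[G]^{<\kappa}$ supplied by $\Phi$ for the singleton entourages, the union $D=\bigcup_{i<\kappa}D_i$ has size $\le\kappa<\lambda$, so any $g\notin D$ has $a_ig\in\Phi(g)$ for all $i<\kappa$, forcing $|\Phi(g)|\ge\kappa$, a contradiction. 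Since a normal coarse group has bounded growth, this yields $|G|=\kappa$. For regularity under solvability I would first dispatch the abelian case: by Theorem~\ref{2.3} an abelian $G$ with $|G|=\lambda\ge\kappa$ is $\kappa$-asymorphic to the free abelian group $F$ of rank $\lambda$; as $F\cong F\oplus F$ and $(F\oplus F,\mathcal E_{[F\oplus F]^{<\kappa}})$ is exactly the coarse product $(F,\mathcal E_{[F]^{<\kappa}})\times(F,\mathcal E_{[F]^{<\kappa}})$ of two unbounded coarse spaces, normality (an asymorphism invariant) transfers to this product. The product–normality theorem then gives a linearly ordered base for $[F\oplus F]^{<\kappa}$, and the Lemma forces $\lambda=\kappa$ with $\kappa$ regular. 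For general solvable $G$ I would induct on the derived length via $1\to G'\to G\to G/G'\to1$: when $[G:G']<\kappa$ the subgroup $G'$ is large and $G$ is coarsely equivalent to the coarse group on $G'$, so one recurses; otherwise one uses the abelian quotient $G/G'$ to split the coarse structure as a product and applies the abelian argument.

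The main obstacle is precisely this last splitting. Expressing $(G,\mathcal E_{[G]^{<\kappa}})$ as (a coarse equivalent of) a product of two unbounded coarse spaces is transparent for direct products $H\oplus K$, but for a non-split extension the left-invariant balls $A(n,q)$ mix the two factors through the extension cocycle $f(a,q)$, so the first coordinate of a ball depends on the second; controlling that dependence up to bounded error is where solvability enters essentially, and where the naive argument breaks for groups such as $S_\kappa$. A secondary gap to fill is the implication \emph{normal $\Rightarrow$ bounded growth} for the coarse group itself rather than for a product; should that be unavailable, I would instead prove $|G|=\kappa$ by a direct non-normality construction for $\lambda>\kappa$, arranging two asymptotically disjoint subsets of $G$ in a $\kappa^{+}\times\kappa$ grid whose asymptotic neighbourhoods are forced to overlap by the very same blow-up count used in the bounded-growth step.
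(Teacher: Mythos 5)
This survey states the theorem without proof, citing \cite{BP}, so there is no in-paper argument to compare against; judged on its own terms, your proposal gets the right skeleton and carries it to completion only in the abelian case. Your Lemma (a cofinal chain exists in $([\lambda]^{<\kappa},\subseteq)$ iff $\lambda=\kappa$ is regular) is correct, and it is exactly the combinatorial device that converts the ``linearly ordered base'' conclusion of the product--normality theorem into the cardinal conclusions; steps (i)--(iii) are all sound. The abelian argument is also complete and correct: Theorem~\ref{2.3} gives a $\kappa$-asymorphism of $G$ onto a free abelian group $F$ of rank $\lambda=|G|$, one has $F\cong F\oplus F$, the identity $\mathcal E_{[F\oplus F]^{<\kappa}}=\mathcal E_{[F]^{<\kappa}}\times\mathcal E_{[F]^{<\kappa}}$ holds because a set of size $<\kappa$ is squeezed between itself and the product of its two projections, normality is an asymorphism invariant, and the Lemma then forces $\lambda=\kappa$ regular. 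Your blow-up computation is likewise correct as far as it goes: if $|G|>\kappa$ then $(G,\mathcal E_{[G]^{<\kappa}})$ cannot have bounded growth, since $\Phi(g)$ would have to contain $\{a_ig:i<\kappa\}$ for every $g$ outside a set of size $\le\kappa$.

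The two steps carrying the real weight of the theorem, however, are left open, and you say so yourself. First, the conclusion $|G|=\kappa$ is claimed for \emph{arbitrary} groups, but your route needs the implication \emph{normal $\Rightarrow$ bounded growth} for a single coarse space, whereas the only available statement (here and in \cite{BP}) concerns products $X\times Y$ of two unbounded spaces; $(G,\mathcal E_{[G]^{<\kappa}})$ is not presented as such a product, and your own coset analysis explains why it cannot be naively: for a subgroup $H$ and transversal $T$, the entourage $\varepsilon_A$ restricted to a coset $tH$ is governed by $t^{-1}At\cap H$, which varies with $t$. The fallback ($\kappa^{+}\times\kappa$ grid witnessing non-normality directly) is only gestured at: you would have to exhibit the two sets, verify their asymptotic disjointness against \emph{every} $A\in[G]^{<\kappa}$, and show every pair of asymptotic neighborhoods meets --- none of which is done. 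Second, for solvable non-abelian $G$ the induction on derived length is a plan, not a proof: the case where $G'$ and $G/G'$ are both unbounded requires replacing a non-split extension by a coarse product up to bounded error, which you explicitly flag as ``the main obstacle'' and do not resolve (and the sub-case $[G:G']<\kappa$ silently uses that normality is invariant under coarse equivalence, not just asymorphism --- true via the slowly-oscillating characterization, but it should be justified). As it stands, then, the proposal proves the theorem for abelian $G$ and reduces the rest to two unproved claims; a genuine proof would have to supply either the extension-splitting step, or some substitute such as passing to a suitable subgroup of full cardinality together with a verification that asymptotic disjointness (hence non-normality) transfers from the subgroup ballean $(H,\mathcal E_{[H]^{<\kappa}})$ to $(G,\mathcal E_{[G]^{<\kappa}})$ --- a transfer which itself is delicate precisely at singular $\kappa$, the case the regularity assertion is about.
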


\subsection{Coarse structures on topological groups}
A subset $A$ of a topological group $G$ (all topological groups are supposed to be Hausdorff) is called {\it totally bounded} if, for every neighborhood $U$ of the identity, there exists a finite set $F\subset G$ such that
 $A\subseteq F U\cap  UF$.
The group bornology $\mathcal{B}_{\tau}$  of all totally bounded subsets of $(G, \tau)$ defines two (antitall) coarse structures
 $\mathcal{E} _{l}$ and $\mathcal{E} _{r}$ generated by the ball structures  $$\big\{\{(x, y)\in G\times G: y\in\{x\}\cup Bx \}:  B \in \mathcal{B}_{\tau}\big\}\mbox{ and }
 \{\{(x, y)\in G\times G: y\in\{x\}\cup Bx:  B \in \mathcal{B}_{\tau}\big\},$$respectively.

The following questions are from \cite{b6}.

\begin{question}\label{2.3}
Given a group bornology $\mathcal{B}$ on a group $G$,  how can one detect whether there exists a group topology $\tau$  on $G$  such that $\mathcal{B}= \mathcal{B}_{\tau}$?
\end{question}

Let $(G, \tau)$  be a topological group.  We denote by
$\tau^{\sharp}$ the strongest group topology on $G$  such that
$\mathcal{B}_{\tau^{\sharp}}= \mathcal{B}_{\tau}$,
 and say that $(G, \tau)$  is $b$-{\it determined } if
 $\tau^{\sharp}= \tau$.
Clearly,  every discrete group is  $b$-determined.  A totally bounded group $G$ is $b$-determined if and only if
$\tau$  is the maximal totally bounded topology on $G$.

\begin{question}\label{2.4}
Given a topological group $G$,  how can one detect whether  $G$ is
$b$-determined?
\end{question}

For the coarse structures $\mathcal{E }_{l}$,  $\mathcal{E} _{r}$  and slowly oscillating functions on locally compact groups, see \cite{b4}.

\section{ Uniform  groups}

We recall that a family $\mathcal{U}$ of subsets of $X\times  X$  is a {\it uniformity} on a set $X$ if
\begin{itemize}
\item{} $\vartriangle_{X}\subseteq u$ for each $u\in\mathcal{U}$;
\item{} if  $u,v\in \mathcal{U}$ then  $u\cap v \in  \mathcal{U}$;
\item{} if $u\in \mathcal{U}$ and $u\subseteq  v\subset X\times X$  then $v\in \mathcal{U}$.
\item{} for every  $u\in \mathcal{U}$, there exists  $v\in \mathcal{U}$ such that $v\circ   v^{-1} \subseteq u$.
\end{itemize}

A family $\mathcal{F} \subseteq \mathcal{U}$ is called a {\it base} of $\mathcal{U}$ if for every
 $u\in \mathcal{U}$  there exists $v\in \mathfrak{F}$  such that
 $v \subseteq u$.
A set $X$, endowed with a uniformity $\mathcal{U}$, is called a {\it uniform spaces}.

We say that a uniformity $\mathcal{U}$ on a group $G$ is {\it left} ({\em right}) {\em invariant} if $\mathcal{U}$  has a base consisting of   left (right) translation invariant entourages (cf. 2.2).

A filter $\varphi$  on a group $G$  is called a {\it group filter} if for every $A\in \varphi$ there exists $B\in \varphi$  such that $BB^{-1} \in \varphi$. In this case every set $A\in\varphi$ contains the unit $e$ of the group $G$. If $\{\e\}\in\varphi$, then the group filter $\varphi$ is called {\it principal}.

Every group filter $\varphi$  defines two uniformities $\mathcal{L}_{\varphi}$  and
$\mathcal{R}_{\varphi}$    on $G$  with the bases
$$\big\{\{(x, y)\in G\times G: y\in xA \}: A \in \varphi\big\}\mbox{ \ and \ } \big\{\{(x, y)\in G\times G: y\in Ax\}:  A \in \varphi\big\}.$$

\begin{proposition}\label{3.1} A uniformity $\mathcal{U}$ on a group $G$ is right invariant if and only if $\mathcal{U} = \mathcal{R}_{\varphi}$
  for some group filter $\varphi$ on $G$.
\end{proposition}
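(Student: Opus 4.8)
The plan is to build an explicit dictionary between right invariant entourages on $G$ and subsets of $G$, and then read off both implications. For $A\subseteq G$ write $R_A:=\{(x,y)\in G\times G:y\in Ax\}$, so that $\mathcal R_\varphi$ is generated by $\{R_A:A\in\varphi\}$. A one-line computation gives $R_Ag=R_A$ for all $g$, so each $R_A$ is right invariant, and $B(e,R_A)=A$. Conversely, if $u$ is a right invariant entourage, then applying right invariance with $g=x^{-1}$ shows $(x,y)\in u\iff(e,yx^{-1})\in u\iff y\in B(e,u)\,x$, whence $u=R_{B(e,u)}$. Thus $A\mapsto R_A$ is a bijection from $\mathcal P_G$ onto the right invariant entourages, with inverse $u\mapsto B(e,u)$; it is monotone and satisfies $R_A\cap R_B=R_{A\cap B}$, $R_A^{-1}=R_{A^{-1}}$ and $R_A\circ R_C=R_{CA}$, so in particular $R_C\circ R_C^{-1}=R_{C^{-1}C}$.

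For the implication from right to left, let $\mathcal U=\mathcal R_\varphi$ with $\varphi$ a group filter. Right invariance is immediate, since $\{R_A:A\in\varphi\}$ is a base of right invariant entourages. To verify that $\mathcal R_\varphi$ is a uniformity I would check the axioms on this base: $\vartriangle_G\subseteq R_A$ holds because every member of a group filter contains $e$, and $R_A\cap R_B=R_{A\cap B}$ with $A\cap B\in\varphi$. The only nontrivial axiom asks, for each $A\in\varphi$, for some $v$ with $v\circ v^{-1}\subseteq R_A$, which by the dictionary means finding $C\in\varphi$ with $C^{-1}C\subseteq A$; the group-filter axiom, however, supplies $B\in\varphi$ with $BB^{-1}\subseteq A$. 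Reconciling these two one-sided conditions is the crux, and it rests on the lemma that a group filter is closed under inversion: given $A$, pick $B$ with $BB^{-1}\subseteq A$; since $e\in B$ we have $B\subseteq BB^{-1}$, so $BB^{-1}\in\varphi$, and as $BB^{-1}$ is symmetric, $BB^{-1}\subseteq A^{-1}$ forces $A^{-1}\in\varphi$. Now applying the axiom to $A$ to get $D$ with $DD^{-1}\subseteq A$ and putting $C:=D^{-1}\in\varphi$ gives $C^{-1}C=DD^{-1}\subseteq A$, as required.

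For the converse, assume $\mathcal U$ is right invariant and set $\varphi:=\{B(e,u):u\in\mathcal U\}$. From $B(e,u\cap v)=B(e,u)\cap B(e,v)$ and the observation that enlarging $u$ by $\{e\}\times A$ realizes any superset, $\varphi$ is a filter, proper because $e\in B(e,u)$ always. To see $\varphi$ is a group filter, fix $A=B(e,u)$ with $u$ a right invariant base entourage; intersecting a right invariant base entourage lying below the $v$ from the uniformity axiom with its own inverse yields a symmetric, right invariant $v\in\mathcal U$ with $v\circ v\subseteq u$. Then $B:=B(e,v)$ satisfies $v=R_B$, so symmetry of $v$ gives $B=B^{-1}$, and a short calculation with right invariance shows $BB^{-1}=BB\subseteq B(e,u)=A$. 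Finally $\mathcal U=\mathcal R_\varphi$ is read off the dictionary: each right invariant $u\in\mathcal U$ equals $R_{B(e,u)}$ with $B(e,u)\in\varphi$, giving $\mathcal U\subseteq\mathcal R_\varphi$; and each base entourage $R_A$ of $\mathcal R_\varphi$, with $A=B(e,u)$, contains $R_{B(e,u')}=u'\in\mathcal U$ for a right invariant $u'\subseteq u$, giving $R_A\in\mathcal U$ and hence $\mathcal R_\varphi\subseteq\mathcal U$.

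The step I expect to be the main obstacle is precisely the bookkeeping of sides: the composition $R_C\circ R_C^{-1}$ naturally produces $C^{-1}C$, not the $BB^{-1}$ built into the definition of a group filter, and the refining entourage $v$ must be chosen simultaneously symmetric and right invariant. Both difficulties are resolved by closure under inversion — of group filters in the first implication and, equivalently, the existence of symmetric bases of uniformities in the second — after which every remaining step is a routine application of the identities for $R_A$.
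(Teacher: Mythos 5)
Your proof is correct, and since the paper (a survey) states Proposition 3.1 without any proof, there is nothing to diverge from: the dictionary $A\mapsto R_A$ with inverse $u\mapsto B(e,u)$, the identities $R_A\circ R_C=R_{CA}$ and $R_C\circ R_C^{-1}=R_{C^{-1}C}$ under the paper's composition convention, and the lemma that group filters are closed under inversion (which reconciles the $C^{-1}C$ produced by the axiom $v\circ v^{-1}\subseteq u$ with the $BB^{-1}$ in the definition of a group filter) constitute exactly the standard argument the paper leaves implicit; you also correctly read the paper's definition of group filter as requiring $BB^{-1}\subseteq A$, repairing the evident typo ``$BB^{-1}\in\varphi$''. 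The one step you use tacitly, namely $w^{-1}\in\mathcal U$ so that $w\cap w^{-1}\in\mathcal U$, does follow from the paper's axioms: choosing $v\in\mathcal U$ with $v\circ v^{-1}\subseteq w$ and using $\vartriangle_G\subseteq v$ gives $v^{-1}\subseteq v\circ v^{-1}\subseteq w$, hence $v\subseteq w^{-1}$ and $w^{-1}\in\mathcal U$ by upward closure.
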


We recall that a topological group $G$ is {\it balanced}  (= SIN)  if the left and right uniformities on $G$ coincide $(=G$ has a base of invariant neighborhoods of the identity).

\begin{proposition}\label{3.2} A uniformity $\mathcal{U}$ on a group $G$ is left and right invariant if and only if $G$ endowed with the topology generated by the uniformity $\mathcal U$ is a
balanced topological group.
\end{proposition}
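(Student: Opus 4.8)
The plan is to reduce everything to the one-sided characterisation in Proposition~\ref{3.1}. First I would record the left analogue of that proposition — a uniformity $\mathcal U$ on $G$ is left invariant if and only if $\mathcal U=\mathcal L_\varphi$ for some group filter $\varphi$ — which follows by the same argument with the roles of left and right interchanged (equivalently, by applying Proposition~\ref{3.1} in the opposite group). Writing $u_A=\{(x,y):y\in xA\}$ and $v_A=\{(x,y):y\in Ax\}$ for the basic entourages of $\mathcal L_\varphi$ and $\mathcal R_\varphi$, I would then assume $\mathcal U$ is simultaneously left and right invariant, so that Proposition~\ref{3.1} and its left analogue give $\mathcal U=\mathcal L_{\varphi}=\mathcal R_{\psi}$ for group filters $\varphi,\psi$. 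Computing the balls $B(e,u_A)=A=B(e,v_A)$ shows that the neighbourhood filter of $e$ in the topology $\tau_{\mathcal U}$ equals both $\varphi$ and $\psi$; hence $\varphi=\psi$ and $\mathcal U=\mathcal L_\varphi=\mathcal R_\varphi$ for one common group filter $\varphi$. This identity is the hinge of the whole argument.

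For the forward implication I would prove that $\tau_{\mathcal U}$ is a balanced group topology with identity neighbourhood filter $\varphi$. Left invariance of the $u_A$ makes every left translation a uniform isomorphism, right invariance does the same for right translations, and the computation $(\iota\times\iota)(u_A)=v_{A^{-1}}$ for the inversion $\iota\colon x\mapsto x^{-1}$ shows that $\iota$ carries $\mathcal L_\varphi$ into $\mathcal R_\varphi=\mathcal U$ and is uniformly continuous. The substantive point is joint continuity of multiplication. Here I would extract from $\mathcal L_\varphi=\mathcal R_\varphi$ the uniform conjugation control
$$\forall A\in\varphi\ \exists B\in\varphi:\ \textstyle\bigcup_{g\in G} gBg^{-1}\subseteq A,$$
which is exactly the translation of the inclusion $v_B\subseteq u_A$, and from the group-filter axiom (which makes $\varphi$ closed under inversion and yields symmetric members $A_1=C\cap C^{-1}$) the square condition $\forall A\ \exists A_1:\ A_1A_1\subseteq A$. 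Combining them, for a neighbourhood $abA$ of a product $ab$ I would pick $A_1$ with $A_1A_1\subseteq A$ and then $B$ with $b^{-1}Bb\subseteq A_1$, so that $(aB)(bA_1)\subseteq ab\,A_1A_1\subseteq abA$. This verifies the classical neighbourhood axioms, so $\tau_{\mathcal U}$ is a group topology; since its left and right uniformities are then precisely $\mathcal L_\varphi$ and $\mathcal R_\varphi$, which coincide, the group is balanced.

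For the converse I would start from the assumption that $(G,\tau_{\mathcal U})$ is a balanced topological group, let $\varphi$ be the neighbourhood filter of $e$, and use that the left and right uniformities of the group are $\mathcal L_\varphi$ and $\mathcal R_\varphi$. The defining property of balanced groups — a base of invariant identity neighbourhoods, equivalently equality of the left and right uniformities — gives $\mathcal L_\varphi=\mathcal R_\varphi$. Identifying this common uniformity with $\mathcal U$ then exhibits simultaneously the base $\{u_A:A\in\varphi\}$ of left invariant entourages and the base $\{v_A:A\in\varphi\}$ of right invariant entourages, so $\mathcal U$ is both left and right invariant.

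The step I expect to cause the most trouble is twofold. On the analytic side, the genuine content of the forward direction is joint continuity of multiplication: invariance of the entourages only makes the translations homeomorphisms (a merely paratopological structure), and one really does need the coincidence $\mathcal L_\varphi=\mathcal R_\varphi$ to supply the conjugation control $\bigcup_g gBg^{-1}\subseteq A$ that upgrades this to a topological group. On the conceptual side, the converse tacitly identifies the given $\mathcal U$ with the canonical two-sided uniformity of the group topology; since a group topology can in general be induced by uniformities that are \emph{not} translation invariant, I would make explicit that $\mathcal U$ is to be read as the uniformity of the balanced group, and it is precisely the balanced hypothesis that forces this uniformity to be $\mathcal L_\varphi=\mathcal R_\varphi$.
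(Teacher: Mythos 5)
The paper states Proposition~\ref{3.2} without any proof (it is a survey), so there is no in-paper argument to compare against; judged on its own, your proof is correct and is the natural argument one would expect: reduce to Proposition~\ref{3.1} and its left analogue, identify the two filters by computing $B(e,u_A)=A=B(e,v_A)$, and use the coincidence $\mathcal L_\varphi=\mathcal R_\varphi$ to extract the conjugation control that upgrades separate continuity of translations to joint continuity of multiplication. The individual computations all check out: $v_B\subseteq u_A$ is indeed equivalent to $\bigcup_{x\in G}x^{-1}Bx\subseteq A$, the image of $u_A$ under inversion is $v_{A^{-1}}$, and the square condition with symmetric members follows from the group-filter axiom once one reads the paper's condition ``$BB^{-1}\in\varphi$'' as the evidently intended ``$BB^{-1}\subseteq A$'' (which also yields $e\in A$ and $A^{-1}\in\varphi$ for all $A\in\varphi$, as your argument uses). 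Your closing caveat about the converse is not pedantry but a genuine repair of the statement: read literally, the converse is false --- for instance the metric $d(n,m)=|e^n-e^m|$ induces on $\mathbb Z$ a uniformity with no invariant base whose topology is the discrete, hence balanced, group topology --- so the proposition must be understood, exactly as you do, with $\mathcal U$ taken to be the canonical two-sided uniformity $\mathcal L_\varphi=\mathcal R_\varphi$ of the balanced group. One further convention you could flag in the same spirit: since the paper assumes all topological groups Hausdorff, the forward direction implicitly requires the uniformity $\mathcal U$ to be separated (equivalently $\bigcap\varphi=\{e\}$); otherwise the conclusion should be read for possibly non-Hausdorff group topologies.
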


\begin{example}\label{3.1}
Let $H$  be a topological group and let $f$  be an arbitrary automorphism  of $H$.
Then the semidirect product $G= H \leftthreetimes \langle f\rangle$  is a right uniform group defined by the filter of neighbourhoods of $e$  in $H$.
If $f$ is discontinuous then $G$  is not left uniform.  Now let $H$ be the Cartesian  product  of infinitely many copies of $\mathbb{Z}_{p}$.
It is easy to  find a discontinuous  automorphism of order 2 of $H$.  Hence, the right uniform group $G$  contains a   compact topological group of index 2 but $G$  is not a topological  group.
\end{example}

We say that a group $G$ is {\it uniformizable}   if there is a non-principal group filter $\varphi$ on $G$ such that $\cap\varphi=\{e\}$.

We recall that a group $G$ is {\it topologizable}  if $G$ admits a non-discrete (Hausdorff) group topology.
Clearly, every  topologizable group is  uniformizable,  but the class of uniformizable groups is much wider  than the class of topologizable groups.
By \cite{b26},  every group can be embedded into some non-topologizable group, and if a group $G$ contains a uniformizable subgroup then $G$ is uniformizable.

Does there exists a non-uniformizable group?
In \cite{b8}, Alexander  Olshansky used the following example to construct a countable non-topologizable group.

\begin{example}\label{3.2} Let $n\ge 2$ be a natural number and $m\ge 665$ be an odd number. Let $A(n,m)$ be the Adian group,  see \cite{b1}.
This group is generated by $n$ elements and has the following properties:
\begin{enumerate}
\item[(a)	] $A(n, m) $ is torsion free;
\item[(b)] the center $C$ of $A(n, m)$ is an  infinite cyclic group,  $C=\langle c\rangle$;
\item[(c)]  $A(n, m)/  C$  is an infinite group of period $m$.
\end{enumerate}
We put $G= A(n, m) / \langle c^{m}\rangle $,   denote by $f: A(n, m)\to G$  the quotient map  and
observe that if $g\in G\setminus \{e\} $  then $g$ or $g^m$ belongs to the set
$\{f(c),  f(c^{2}), \ldots , f(c^{m-1})\}$.
It follows that if $\varphi$  is a group filter on $G$ and $\cap\varphi =\{e\}$ then
$\{e\}\in\varphi$, so $G$  is non-uniformizable.
\end{example}

\begin{question}\label{3.1}
Does every uniformizable group contain a topologizable subgroup?
\end{question}

\begin{question}\label{3.2}
Can one find  a criterion of uniformizability  of countable groups in spirit of Markov's criterion of  topologizability?
\end{question}


\end{document}